\documentclass[11pt]{amsart}

\usepackage{amsmath,amssymb,amsthm,mathtools,mathrsfs}
\mathtoolsset{showonlyrefs}
\usepackage[margin=1.15in]{geometry}
\usepackage{tikz}
\usepackage[colorlinks=true,citecolor=blue,linkcolor=blue,urlcolor=blue]{hyperref}

\numberwithin{equation}{section}

\newtheorem{theorem}{Theorem}[section]
\newtheorem{proposition}[theorem]{Proposition}
\newtheorem{lemma}[theorem]{Lemma}

\theoremstyle{definition}
\newtheorem{definition}[theorem]{Definition}
\newtheorem{remark}[theorem]{Remark}

\begin{document}
\title[A Horizon-Free Extrinsic Penrose Inequality]{A Horizon-Free Extrinsic Penrose Inequality}
\date{\today}

\author{Caiyan Li}
\address[C.L]{School of Mathematical Sciences, Xiamen University, 361005, Xiamen, P.R. China}
\email{caiyanli@xmu.edu.cn}

\begin{abstract}
Let $S\subset\mathbb R^3$ be a properly embedded mean-convex planar surface with finitely many ends. Designate one end as asymptotically flat, assume that $H_S$ is integrable there, and denote its extrinsic mass by $m_+(S)$. Let $A_S$ be the infimum of the areas of compact surfaces separating the distinguished end from all the others. We prove
\[
m_+(S)\geq\sqrt{\frac{A_S}{\pi}}.
\]
No outermost free-boundary minimal surface is assumed, and no asymptotic or integrability condition is imposed on the other ends. If $A_S=0$, equality holds precisely for the Euclidean half-space. The complete catenoid realizes equality with $A_S>0$. Conversely, if equality holds with $A_S>0$, then $A_S$ is attained by a flat free-boundary disk that is outermost toward the distinguished end, and the corresponding component of $S$ is a half-catenoid.
\end{abstract}

\maketitle

\section{Introduction}
\label{sec:introduction}

The positive mass theorem and the Riemannian Penrose inequality are the basic mass inequalities for asymptotically flat manifolds. Schoen and Yau proved that nonnegative scalar curvature implies nonnegative ADM mass, with equality only for Euclidean space \cite{SchoenYau79,SchoenYau81}. In dimension three, the Penrose inequality strengthens this conclusion in the presence of an outermost minimal surface $D$ to $m_{\mathrm{ADM}}\geq\sqrt{|D|/(16\pi)}$. Huisken--Ilmanen proved the connected case by developing weak inverse mean curvature flow, while Bray treated arbitrary numbers of components using conformal flow \cite{HI,Bray}.

The theory was subsequently extended to manifolds with noncompact boundary. Almaraz, Barbosa, and de Lima proved the corresponding positive mass theorem \cite{AlmarazBarbosaLima}. Marquardt constructed weak inverse mean curvature flow with free boundary and identified a monotone boundary analogue of the Hawking mass \cite{Marquardt}. Koerber used this theory to prove the Penrose inequality in dimension three \cite{Koerber}, and Eichmair and Koerber later obtained a doubling proof and a higher-dimensional extension \cite{EKdoubling}. These results retain an outermost minimal surface as the horizon. Zhu introduced a different formulation in the boundaryless setting: the horizon area is replaced by the infimum of the areas of compact surfaces separating a distinguished end from the other ends \cite{Zhu}. His inequality therefore requires no prescribed horizon.

The extrinsic theory began with Volkmann's asymptotically flat support surfaces and their exterior mass \cite{Volkmann}. His thesis proves positive-mass rigidity under asymptotically catenoidal assumptions and records Huisken's conjecture for an extrinsic Penrose inequality. Eichmair and Koerber resolved this conjecture for an asymptotically flat support surface carrying an outermost free-boundary minimal surface $D$, proving $m\geq\sqrt{|D|/\pi}$ with equality precisely when the exterior support is a half-catenoid \cite{EK}. Their proof uses $J_t$-minimizing minimal capillary surfaces for
\begin{equation}
 J_t(\Sigma)=|\Sigma|-\tanh(t)|S(\Sigma)|.
 \label{eq:EK-free-energy}
\end{equation}
Here $S(\Sigma)$ is the compact lateral support region determined by $\Sigma$. They establish monotonicity of the associated free-energy mass. For related regularity and stability results for capillary surfaces, see \cite{ChodoshEdelenLi,DePhilippisMaggi,HongSaturnino,Ros}. The present work combines Zhu's separating-area formulation with this capillary method. It removes the prescribed horizon and allows finitely many additional ends without asymptotic control.

Designate one end $\mathcal E_+$ as asymptotically flat, and let $A_S$ be the infimum of the areas of surfaces separating $\mathcal E_+$ from the remaining ends. Under
\[
 H_S\geq0\quad\text{on }S,
 \qquad H_S\in L^1(S_+),
\]
we prove
\[
 m_+(S)\geq\sqrt{\frac{A_S}{\pi}},
\]
with no asymptotic or integrability assumption on the other ends. If $A_S=0$, equality characterizes the Euclidean half-space. If $A_S>0$, equality determines the distinguished exterior as a half-catenoid.

Let $M\subset\mathbb R^3$ be a connected domain with connected smooth
properly embedded boundary $S=\partial M$. Let $\nu_S$ be its outward
unit normal. Our sign convention is
\[
 h_S(X,Y)=\langle\bar\nabla_X\nu_S,Y\rangle,
 \qquad H_S=\operatorname{tr}_S h_S.
\]
For some integer $q\geq0$, choose distinct points
$p_+,p_1,\ldots,p_q\in\partial\mathbb B^3$, with only $p_+$ when $q=0$,
and assume that there is a homeomorphism of pairs
\begin{equation}
 (\overline M,S)\cong
 \left(\overline{\mathbb B^3}\setminus\{p_+,p_1,\ldots,p_q\},
 \partial\mathbb B^3\setminus\{p_+,p_1,\ldots,p_q\}\right).
 \label{eq:topology}
\end{equation}
Thus $S$ is planar and has $q+1$ ends. The end corresponding to $p_+$ is
denoted by $\mathcal E_+$, and the remaining ends by
$\mathcal E_1,\ldots,\mathcal E_q$. Condition~\eqref{eq:topology} is purely
topological; no geometric control is imposed at the latter ends.

\begin{definition}
\label{def:af-end}
The end $\mathcal E_+$ is \emph{asymptotically flat} if, after a rigid motion, there
are $R_0>0$, $\tau>1/2$, a compact set $K\subset S$, and a smooth function
$\psi:\mathbb R^2\setminus B_{R_0}\to\mathbb R$ such that
\[
 S_+=\{(y,\psi(y)):|y|>R_0\},
\]
where $S_+$ is the component of $S\setminus K$ representing
$\mathcal E_+$. Along this component, the chosen outward normal is
\[
 \nu_S(y,\psi(y))
 =\frac{(D\psi(y),-1)}{\sqrt{1+|D\psi(y)|^2}}.
\]
Moreover,
\[
 |D\psi(y)|+|y||D^2\psi(y)|
 =O(|y|^{-\tau}).
\]
With this orientation, the \emph{extrinsic mass of the distinguished end} is
\begin{equation}
 m_+(S)=\lim_{r\to\infty}\frac1{2\pi r}
 \int_{|y|=r}y\cdot D\psi\,d\mathcal H^1.
 \label{eq:mass}
\end{equation}
\end{definition}

Along $S_+$, the chosen orientation means that $M$ lies locally on the side
$x_3>\psi(y)$. The assumption $H_S\in L^1(S_+)$ ensures that the limit in
\eqref{eq:mass} exists; see the discussion preceding
\cite[Theorem~1]{EK}.

\begin{definition}
\label{def:admissible}
A compact smooth two-sided properly embedded surface
$\Sigma\subset\overline M$ is an \emph{admissible separator} if
$\partial\Sigma=\Sigma\cap S$, the intersection is transverse, no component
of $\Sigma$ is closed in $M$, and there is a relatively open set
$\Omega(\Sigma)\subset\overline M$ whose boundary in $\overline M$ is
$\Sigma$ and which satisfies
\[
 \mathcal E_j\subset\Omega(\Sigma)\quad(1\leq j\leq q),
 \qquad \mathcal E_+\cap\Omega(\Sigma)=\varnothing.
\]
Both $\Sigma$ and $\Omega(\Sigma)$ may be disconnected. The
collection of admissible separators is denoted by $\mathcal A_S$, and
\begin{equation}
 A_S=\inf_{\Sigma\in\mathcal A_S}|\Sigma|.
 \label{eq:AS}
\end{equation}
\end{definition}
The class $\mathcal A_S$ is nonempty by \eqref{eq:topology}.

For admissible disks $\Gamma$ and $\Gamma'$, we write $\Gamma\prec\Gamma'$
if $\Omega(\Gamma)\subset\Omega(\Gamma')$.

For $m>0$, write
\[
 \mathrm{Cat}_m=\left\{(y,x_3):|y|=m\cosh\left(\frac{x_3}{m}\right)\right\}
\]
for the complete catenoid of waist radius $m$, and
\begin{equation}
 \mathrm{Cat}_m^+=\mathrm{Cat}_m\cap\{x_3\geq0\}
 =\left\{\left(y,m\operatorname{arcosh}\frac{|y|}{m}\right):
 |y|\geq m\right\}
 \label{eq:half-catenoid}
\end{equation}
for the upper half-catenoid.

\begin{definition}
\label{def:outermost}
A free-boundary minimal surface $D\in\mathcal A_S$ is \emph{outermost toward
$\mathcal E_+$} if every component of $D$ belongs to the boundary of the
$\mathcal E_+$-component $M_+(D)$ of $M\setminus D$, and every compact
free-boundary minimal surface contained in $\overline{M_+(D)}$ is a
component of $D$.
\end{definition}

\begin{theorem}
\label{thm:main}
Let $S\subset\mathbb R^3$ satisfy \eqref{eq:topology}, and suppose that
$\mathcal E_+$ is asymptotically flat in the sense of
Definition~\ref{def:af-end}. Assume
 \begin{equation}
 H_S\geq0\quad\text{on }S,
 \qquad H_S\in L^1(S_+).
 \label{eq:natural-hypotheses}
\end{equation}
Then the following statements hold.
If $A_S=0$, then $m_+(S)\geq0$, with equality if and only if $q=0$ and,
after a rigid motion,
\[
 M=\{x_3>0\},\qquad S=\{x_3=0\}.
\]
If $A_S>0$, then
\begin{equation}
 m_+(S)\geq\sqrt{\frac{A_S}{\pi}}.
 \label{eq:main-inequality}
\end{equation}
If equality holds in \eqref{eq:main-inequality}, then $A_S$ is attained by a
free-boundary minimal disk $D$ that is outermost toward $\mathcal E_+$. After
a rigid motion,
\[
 D=\{(y,0):|y|\leq m_+(S)\},
\]
and the component of $S\setminus\partial D$ containing $\mathcal E_+$ is
$\mathrm{Cat}_{m_+(S)}^+$.
\end{theorem}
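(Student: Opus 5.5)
We reduce the statement to the Eichmair--Koerber extrinsic Penrose inequality \cite{EK}, applied on a region that contains no horizon of its own. The reduction is a minimization over admissible separators, following Zhu's separating-area idea \cite{Zhu}.

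\textbf{Step 1: existence of an outermost free-boundary minimal separator when $A_S>0$.} Take a minimizing sequence in $\mathcal A_S$. Mean convexity $H_S\geq0$ makes $S$ a barrier for free-boundary area minimization. Because $\mathcal E_+$ is asymptotically flat, large hemispherical caps $\{|x|=r\}\cap\overline M$ are mean-convex near $S_+$ and act as a second barrier confining the sequence toward $\mathcal E_+$. The other ends carry no geometric control, so we cannot confine the sequence there. Instead we exploit the freedom in the definition: $\Omega(\Sigma)$ need only contain $\mathcal E_j$, so components drifting into the $\mathcal E_j$ can be discarded or replaced without increasing area. This gives a localized minimization problem. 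Applying the free-boundary regularity and compactness cited in the paper \cite{DePhilippisMaggi,ChodoshEdelenLi}, we obtain either a minimizer $D$ with $|D|=A_S$, or a limit at which area escapes down the uncontrolled ends. The second alternative must be handled by running the argument on exhausting truncations. This is the main obstacle, because $A_S$ need not be attained.

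To get around it, we plan to prove the inequality for each admissible separator $\Sigma$ in the form $m_+(S)\geq\sqrt{|\Sigma'|/\pi}$ for some $\Sigma'\in\mathcal A_S$, rather than insisting on a minimizer of $A_S$. Concretely, minimize area in the class of separators lying in $\overline{\Omega(\Sigma)^c}$ and enclosing $\Sigma$. This region is compact toward the $\mathcal E_j$, so the minimizer $D_\Sigma$ exists, and it is a free-boundary minimal surface away from $\Sigma$. Taking the outermost such surface, i.e. the boundary of the union of the trapped regions, gives a $D$ that is outermost toward $\mathcal E_+$ in the sense of Definition~\ref{def:outermost}. Its area satisfies $A_S\leq|D|$.

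\textbf{Step 2: apply the capillary method on $M_+(D)$.} The region $M_+(D)$ has exactly one end, which is asymptotically flat, has $H_S\geq0$, and carries the outermost free-boundary minimal surface $D$. The $J_t$-minimizing capillary foliation and the monotonicity of the free-energy mass from \cite{EK} then give
\[
m_+(S)\geq\sqrt{|D|/\pi}\geq\sqrt{A_S/\pi}.
\]
Here we must check that the \cite{EK} argument needs only integrability of $H_S$ on $S_+$ and compactness of the support region $S(\Sigma)\subset S\cap\overline{M_+(D)}$. Both hold because the $\mathcal E_j$ have been cut off. If $D$ is disconnected, we use the multi-component version of the Eichmair--Koerber result, or else a Bray-type argument.

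\textbf{Step 3: the cases $A_S=0$ and equality.} If $A_S=0$, the positive mass theorem of \cite{Volkmann,EK} applies directly when $q=0$. When $q\geq1$, any outermost $D$ as above gives $m_+(S)\geq\sqrt{|D|/\pi}>0$, because a free-boundary minimal surface cannot have zero area. So equality with $m_+(S)=0$ forces $q=0$, and the rigidity of the positive mass theorem gives the half-space.

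For $A_S>0$ with equality, both inequalities $m_+(S)\geq\sqrt{|D|/\pi}$ and $|D|\geq A_S$ are equalities. The first forces $D$ to attain $A_S$, and the equality case of \cite{EK} makes $M_+(D)$ bounded by a half-catenoid $\mathrm{Cat}^+_{m}$ with flat waist disk $D$ of radius $m=m_+(S)$. The complete catenoid example is computed directly: its neck disk has area $\pi m^2$, so $\sqrt{A_S/\pi}=m$.
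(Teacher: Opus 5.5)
\textbf{Gap in Step~1.} Your Step~2 is essentially the paper's Proposition~\ref{prop:existing-outermost}, and it is valid whenever an outermost free-boundary minimal separator $D$ exists. The problem is Step~1: such a $D$ need not exist, and in general $M$ contains no compact free-boundary minimal surface at all.

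Here is a counterexample. Take $M=\{(y,x_3):|y|<f(x_3)\}$ with $f'>0$ everywhere, $f(x_3)\downarrow R$ as $x_3\to-\infty$, and $1+f'^2=f^2/\mu^2$ for large $x_3$, so the upper end is catenoidal with mass $\mu$. Mean convexity of $S$ is the condition $ff''\le1+f'^2$, i.e.\ $\log(1+f'^2)-2\log f$ is nonincreasing as a function of $f$. Such $f$ exist, and every hypothesis holds with $q=1$.

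The projection argument of Proposition~\ref{prop:catenoid-model} gives $A_S=\pi R^2>0$. This value is approached by horizontal disks sliding down the cylindrical end and is never attained. Now suppose $Q$ were a compact free-boundary minimal surface in $\overline M$. Along $\partial Q\neq\varnothing$ its outward conormal is $\nu_S$, so
\[
0=\int_Q\Delta_Qx_3=\int_{\partial Q}\langle e_3,\nu_S\rangle\,ds=-\int_{\partial Q}\frac{f'(x_3)}{\sqrt{1+f'(x_3)^2}}\,ds<0,
\]
a contradiction.

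Your obstacle problem exhibits the same failure. By the projection bound, the least-area separator enclosing the horizontal disk at height $z_0$ is that disk itself, and it is not free-boundary since $f'(z_0)\neq0$. The construction ``boundary of the union of trapped regions'' also has nothing to start from, because there are no minimal barriers here. The underlying issue is that the $t=0$ problem has no coercivity against escape down an uncontrolled end.

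Step~3 inherits the gap. Replacing $f\to R$ by, e.g., $f=e^{ax_3}$ on the lower end gives $A_S=0$ and $q=1$, again with no $D$. So $m_+(S)>0$ cannot come from an outermost surface; the paper instead uses a positive mass theorem allowing arbitrary ends. In the equality case, likewise, $D$ has to be produced rather than assumed.

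\textbf{The missing idea.} You need to work at a positive capillary parameter and never pass to a limit surface. For $t\in(0,t_0)$ the paper minimizes
\[
\mathscr F_t(G)=P(G;\mathring M)+\tanh t\int_S\operatorname{Tr}_S\chi_G\,dA_S
\]
over complements $G=\Omega(\Gamma_1)\setminus E$. By \eqref{eq:intro-coercivity} this functional controls the full Euclidean perimeter, so the isoperimetric inequality forces every minimizer to be bounded. The lateral-area penalty is exactly what prevents the escape in the example above.

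The resulting $\Sigma_t$ is a union of stable capillary disks and is strictly outer $J_t$-minimizing. After capping its compact side, Eichmair--Koerber monotonicity from parameter $t$ gives
\[
m_+(S)\ge(\cosh t)^{-1}\sqrt{|\Sigma_t|/\pi}\ge(\cosh t)^{-1}\sqrt{A_S/\pi},
\]
and letting $t\downarrow0$ finishes the proof. In the example, the $\Sigma_t$ exist for every $t>0$ and slide down the cylinder as $t\downarrow0$.

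Your plan of bounding $m_+(S)$ by $\sqrt{|\Sigma'|/\pi}$ for some $\Sigma'\in\mathcal A_S$ has the right shape, but it only works with the $1/\cosh t$ loss at $t>0$. In the equality case, the free-boundary disk $D$ is then obtained as the limit of the flat capillary disks $\Sigma_t$. This comes after showing that $t\mapsto(\cosh t)^{-1}\sqrt{|\Sigma_t|/\pi}$ is constant and that each $S_t^+$ is catenoidal.
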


For an open set $U\subset\mathbb R^3$ and a measurable set $E\subset U$, let $P(E;U)$ denote the perimeter of $E$ in $U$. For a finite-perimeter set $E\subset M$, let
$\operatorname{Tr}_S\chi_E$ denote its interior trace on $S$. Extending $E$ by zero across $S$, the perimeter decomposition
\cite[Chapter~16]{Maggi} gives
\begin{equation}
 P(E;\mathbb R^3)
 =P(E;\mathring M)+\int_S\operatorname{Tr}_S\chi_E\,dA_S.
 \label{eq:trace-perimeter-completion}
\end{equation}

\begin{definition}
\label{def:inner-end-region}
A measurable set $E\subset M$ is an \emph{inner end region} if it has locally finite perimeter, contains a full tail
of every $\mathcal E_j$, $1\leq j\leq q$, and excludes a full tail of $\mathcal E_+$.
\end{definition}

When $A_S=0$, the conclusion and its equality case follow from the positive
mass theorem for arbitrary ends and noncompact boundary
\cite{PMArbitraryEnds}. We therefore consider $A_S>0$ below; in this case,
$q\geq1$.

We now outline the proof for $A_S>0$.
The main compactness issue is escape through the uncontrolled ends. After
constructing calibrated admissible disks $\Gamma_0\prec\Gamma_1$, associate
with each inner end region $E$ the set
\[
G=\Omega(\Gamma_1)\setminus E.
\]
Under this correspondence, the capillary energy changes only by an additive
constant, while the functional on $G$ satisfies
\begin{equation}
	\mathscr F_t(G)
	=P(G;\mathring M)+\tanh t
	\int_S\operatorname{Tr}_S\chi_G\,dA_S
	=\tanh t\,P(G;\mathbb R^3)
	+(1-\tanh t)P(G;\mathring M).
	\label{eq:intro-coercivity}
\end{equation}
Thus $\mathscr F_t$ controls the full Euclidean perimeter independently of
the uncontrolled geometry. The direct method gives an
$\mathscr F_t$-minimizer in the constrained region, and Euclidean
isoperimetry shows that every such minimizer is bounded.

For each $t\in(0,t_0)$, the smallest $\mathscr F_t$-minimizer $G_t$ determines
a compact strongly stable minimal capillary separator $\Sigma_t$ whose
components are disks. Set
\[
E_t=\Omega(\Gamma_1)\setminus G_t.
\]
Boundedness and $\mathscr F_t$-minimality place every contact curve on the
boundary of the distinguished exterior and make $E_t$ strictly outer
$\mathfrak J_t$-minimizing. We then complete the compact side of $\Sigma_t$
away from $\mathcal E_+$, preserving the distinguished exterior and its mass. On the resulting
one-ended support, $\Sigma_t$ is strictly outer $J_t$-minimizing, so the
monotonicity and asymptotic results of \cite{EK} give
\[
m_+(S)\geq\frac1{\cosh t}\sqrt{\frac{|\Sigma_t|}{\pi}}
\geq\frac1{\cosh t}\sqrt{\frac{A_S}{\pi}}.
\]
Letting $t\downarrow0$ proves the inequality without requiring convergence
of $\Sigma_t$. For rigidity, lattice comparison orders the sets $G_t$.
Equality in the positive-parameter monotonicity then identifies the
corresponding portions of $S$ as nested pieces of one catenoid whose waist
disk realizes $A_S$.

Section~\ref{sec:outer-barrier} constructs the exterior calibration and
capillary separators. Section~\ref{sec:outward-theory} applies the one-ended
theory, and Section~\ref{sec:conditional-rigidity} proves rigidity and
sharpness.

\subsection*{Acknowledgments}

The author would like to thank Chao Xia and Jintian Zhu for discussions that inspired the problem considered in this paper.
This work was partially supported by NSFC (Grant No.~12501274).

\section{Preliminaries}
\label{sec:outer-barrier}

Throughout this section, assume $A_S>0$. Then $q\geq1$.

We first construct the minimal-disk foliation that provides the exterior calibration toward $\mathcal E_+$.

\begin{lemma}
	\label{lem:finite-slab-calibration}
	There exist admissible minimal disks $\Gamma_0\prec\Gamma_1$, constants
	$0<t_0<t_1$, and a smooth function $v$ on the closure of the component of
	$M\setminus\Gamma_0$ containing $\mathcal E_+$. The function satisfies
	$v\to\infty$ along $\mathcal E_+$, its level sets are minimal disks, and
	\[
	 \Gamma_i=\{v=t_i\},\qquad i=0,1.
	\]
	On the component of $M\setminus\Gamma_0$ containing $\mathcal E_+$,
	\begin{equation}
	\operatorname{div}\left(
	\frac{\bar\nabla v}{|\bar\nabla v|}
	\right)=0,
	\label{eq:calibration-interior}
	\end{equation}
	and on its lateral support boundary,
	\begin{equation}
	-\frac{\bar\nabla v}{|\bar\nabla v|}\cdot\nu_S>\tanh t_0.
	\label{eq:calibration-wall}
	\end{equation}
	Moreover, $v$ and its minimal level-set foliation extend smoothly across $\Gamma_0$ to the side away from $\mathcal E_+$.
\end{lemma}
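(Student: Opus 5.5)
The plan is to build the foliation explicitly far out in the distinguished end. By Definition~\ref{def:af-end}, $S_+$ is a graph $x_3=\psi(y)$ over $\{|y|>R_0\}$ with $|D\psi|=O(|y|^{-\tau})$, and $M$ lies on the side $x_3>\psi$. The leaves should therefore be almost horizontal minimal disks whose boundary curves are the nearly circular, nearly planar curves
\[
 c_s=S\cap\{|y|=s\},\qquad s\geq R_1\gg R_0 .
\]
By the convex hull property, a compact minimal surface bounded by such a curve is nearly a flat disk of radius $s$. I will set $v=t$ on the leaf through $c_{s(t)}$, with $s(t)\to\infty$ as $t\to\infty$, so that $v\to\infty$ along $\mathcal E_+$. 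Then $\Gamma_0,\Gamma_1$ are the leaves with $t_0<t_1$, with $t_0$ fixed in the last step.

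\emph{Existence and embeddedness.} For each large $s$, let $K_s$ be the compact piece of $\overline M$ bounded by $S$ and cut off by a large cylinder containing the compact side. Since $H_S\geq0$, this domain is mean convex away from $c_s$. I will solve the Plateau problem in $K_s$ for $c_s$ using Meeks--Yau. This gives an embedded least-area disk $\Gamma^{(s)}\subset\overline M$ whose interior does not touch $S$, by the strong maximum principle against the mean-convex boundary. The disk separates $\mathcal E_+$ from $\mathcal E_1,\dots,\mathcal E_q$: the only part of $M$ reaching $\mathcal E_+$ outside a compact set is the region above $S_+$, and $\Gamma^{(s)}$ caps it off. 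Hence $\Gamma^{(s)}$ is admissible.

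\emph{Uniqueness and graphicality.} I will rescale $\Gamma^{(s)}$ by $s^{-1}$. The rescaled boundary curves converge in $C^{1,\alpha}$ to the unit circle in a horizontal plane, because
\[
 |D\psi|+|y||D^2\psi|=O(|y|^{-\tau}).
\]
Allard-type boundary regularity, together with the convex hull property, then shows that the rescaled disks converge smoothly to the flat unit disk. Consequently, for $s$ large, $\Gamma^{(s)}$ is a graph $x_3=u_s(y)$ over a domain close to $B_s$, with
\[
 |Du_s|=O(s^{-\tau}).
\]
The flat disk is strictly stable with Dirichlet boundary conditions. By the implicit function theorem, these graphs are unique near the limit, depend smoothly on $s$, and have positive normal velocity $\partial_s$. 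Positivity of the normal velocity comes from the maximum principle for the Jacobi operator, because the boundary moves monotonically outward along $S_+$. The leaves are therefore disjoint and foliate the region between any two of them. Defining $v$ through $s(t)$ makes every level set a minimal disk, so
\[
 \operatorname{div}\bigl(\bar\nabla v/|\bar\nabla v|\bigr)=H_{\{v=t\}}=0,
\]
which is \eqref{eq:calibration-interior}.

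\emph{The wall condition and extension.} The unit normal $\bar\nabla v/|\bar\nabla v|$ points toward $\mathcal E_+$. Since both $\Gamma^{(s)}$ and $S_+$ are nearly horizontal near the contact curve, this normal is close to $(0,0,1)$ while $\nu_S$ is close to $(0,0,-1)$. Hence
\[
 -\frac{\bar\nabla v}{|\bar\nabla v|}\cdot\nu_S\geq 1-Cs^{-\tau}.
\]
On the lateral support boundary of the exterior component, $s\geq s(t_0)$. Choosing $t_0$ with $\tanh t_0<1-Cs(t_0)^{-\tau}$, which is possible after reparametrizing so that $t_0$ is small and $s(t_0)\geq R_1$, gives \eqref{eq:calibration-wall}. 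The smooth extension of $v$ and of the foliation across $\Gamma_0$ comes free from the construction: the family is defined for all $s\geq R_1$, and $\Gamma_0$ sits at a parameter strictly above $R_1$.

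The main obstacle is the uniform graphical control of the Plateau solutions near their boundary. The interior of $M$ inside $|y|<R_0$ is uncontrolled, and a priori a least-area disk could dip toward the compact part of $S$. The first thing I will try is to confine $\Gamma^{(s)}$ to the slab $|x_3-\psi(s\theta)|\lesssim s^{1-\tau}$ using horizontal planes as barriers, via the convex hull property. This keeps the disk far from the compact set $K$ for large $s$, after which the blow-down argument above applies.
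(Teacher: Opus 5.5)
Your route differs from the paper's. The paper takes a single far-out Plateau disk $D_*$ (Lemma~\ref{lem:asymptotic-plateau-disk}), rounds the corner along $\partial D_*$ to get a one-ended support $\widehat S$, and imports the foliation, with the wall bound $-\frac{\bar\nabla v}{|\bar\nabla v|}\cdot\nu_{\widehat S}>\tanh v$, from \cite[Proposition~13]{EK}. Building the leaves by hand is fine, and so is absorbing your cruder bound $1-Cs^{-\tau}$ by reparametrizing $v$ so that $t_0$ is small.

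The genuine gap is that nothing in your argument shows that the leaves $\Gamma^{(s)}$, $s\geq R_1$, sweep out the whole $\mathcal E_+$-component of $M\setminus\Gamma_0$. The lemma needs this: $v$ must be defined on the closure of that component and tend to $\infty$ along $\mathcal E_+$. The boundary curves $c_s$ escape, but the leaves need not. Comparison (using $\Gamma^{(s')}\subset\overline M$) makes $u_s(y)$ increasing in $s$. Your own estimate $|Du_s|=O(s^{-\tau})$ then shows that if $\sup_s u_s(0)<\infty$, $u_s$ converges locally uniformly to a constant $c$. In that case every leaf lies in $\{x_3\leq c\}$ and the leaves accumulate on that plane, so the part of the exterior above it is met by no leaf.

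Excluding this is a global positive-mass statement that your local, perturbative analysis cannot see. Let $\bar\psi(s)$ be the average of $\psi$ over $\{|y|=s\}$; then \eqref{eq:mass} gives $s\,\bar\psi'(s)\to m_+(S)$. Writing the minimal surface equation as $\Delta u=\operatorname{div}\bigl(Du\,(1-(1+|Du|^2)^{-1/2})\bigr)$ and using the Green function of $B_s$ gives $u_s(0)=\bar\psi(s)+O(s^{1-3\tau})$. So the leaves escape once $m_+(S)>0$. You must import that positivity, for instance from \cite{PMArbitraryEnds}, since $q\geq1$ excludes the half-space. In the paper, this input sits inside the one-ended result of \cite{EK} applied to the non-planar support $\widehat S$.

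Two smaller points also need repair.

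\emph{Sign of the boundary normal speed.} Along the contact curve the normal speed of your family is $(\partial_r\psi-\partial_r u_s)/\sqrt{1+|Du_s|^2}$. Because leaf and support are nearly tangent there, the outward motion of $c_s$ along $S_+$ does not determine its sign. What does is Hopf's lemma for $u_s-\psi\geq0$ near $\partial B_s$: $H_S\geq0$ makes $\psi$ a subsolution of the minimal surface equation, and $\Gamma^{(s)}\subset\overline M$ gives $u_s\geq\psi$. The resulting strict inequality $\partial_r\psi>\partial_r u_s$ is also the transversality needed for admissibility, which you did not check.

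\emph{Slab confinement.} Confining $\Gamma^{(s)}$ to a slab keeps it away from $K$ only if $\min_{\partial B_s}\psi\to\infty$, which is the same mass issue again. It is also unnecessary. The interior of the Meeks--Yau disk is minimal in $\mathbb R^3$ and $c_s$ is a graph over a round circle, so Rad\'o's theorem gives graphicality directly, as in Lemma~\ref{lem:asymptotic-plateau-disk}. Also truncate by balls with rounded corners (Lemma~\ref{lem:localized-mean-convex-rounding}) rather than by a cylinder, since $M\cap\{|y|<R\}$ need not be bounded.
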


\begin{proof}
	\emph{Step 1.} We first construct an admissible absolutely area-minimizing disk with boundary on a large coordinate sphere in $\mathcal E_+$.
	For a sufficiently large regular value $\lambda$,
	Lemma~\ref{lem:asymptotic-plateau-disk} gives a unique absolutely
	area-minimizing admissible disk $D_*$ with
	\[
	\partial D_*=S_+\cap\partial B_\lambda.
	\]

	\medskip
	\emph{Step 2.} We next smooth the mean-convex corner along $\partial D_*$ to obtain a one-ended asymptotically flat support surface.
	Along $\partial D_*$, the component of $M\setminus D_*$ containing $\mathcal E_+$ has the corner $D_*\cup S_+(D_*)$, with
	\[
	H_{D_*}=0,\qquad H_{S_+(D_*)}\geq0,\qquad
	|\nu_{D_*}\wedge\nu_S|>0
	\quad\text{on }\partial D_* .
	\]
	Thus the corner is transverse and mean-convex. Lemma~\ref{lem:localized-mean-convex-rounding} smooths this corner to a proper asymptotically flat support surface $\widehat S$ satisfying
	\[
	H_{\widehat S}\geq0,\qquad
	\int_{\widehat S}|H_{\widehat S}|\,dA<\infty .
	\]
	The modification is compact. By \eqref{eq:topology}, the resulting exterior region is diffeomorphic to the Euclidean half-space.

	\medskip
	\emph{Step 3.} We finally apply \cite[Proposition~13]{EK} to $\widehat S$. It gives a minimal-disk foliation. After a monotone reparametrization, the foliation is described by a smooth function $v$ satisfying
	\[
	v\to\infty,\qquad |\bar\nabla v|>0,\qquad
	H_{\{v=s\}}=0,
	\qquad
	-\frac{\bar\nabla v}{|\bar\nabla v|}
	\cdot\nu_{\widehat S}>\tanh v .
	\]

	Choose regular values $0<t_0<t_1$ sufficiently large that
	$\widehat S=S$ on a neighborhood of $\{v\geq t_0\}$, and define
\[
 \Gamma_i=\{v=t_i\},\qquad i=0,1.
\]
Then $\Gamma_0\prec\Gamma_1$ and the foliation transfers to $(M,S)$. On the component of $M\setminus\Gamma_0$ containing $\mathcal E_+$,
\[
\operatorname{div}\left(\frac{\bar\nabla v}{|\bar\nabla v|}\right)=H_{\{v=s\}}=0,
\]
and on $S\cap\{v\geq t_0\}$,
\[
-\frac{\bar\nabla v}{|\bar\nabla v|}\cdot\nu_S>\tanh v\geq\tanh t_0 .
\]
Since $\widehat S=S$ on a neighborhood of $\{v\geq t_0\}$, the function $v$ and its minimal level-set foliation extend across $\Gamma_0$ to the side away from $\mathcal E_+$.
\end{proof}

Let $\Gamma_0\prec\Gamma_1$, $t_0$, and $v$ be as in
Lemma~\ref{lem:finite-slab-calibration}, and set
\[
 K_0=\overline{\Omega(\Gamma_1)\setminus\Omega(\Gamma_0)}.
\]
For $t\in(0,t_0)$, define the functional $\mathscr F_t$ on a finite-perimeter set $G\subset\Omega(\Gamma_1)$ by
\begin{equation}
 \mathscr F_t(G)=P(G;\mathring M)
 +\tanh t\int_S\operatorname{Tr}_S\chi_G\,dA_S.
 \label{eq:barrier-free-energy}
\end{equation}
Fix an admissible reference disk $B\prec\Gamma_0$. Define the renormalized functional $\mathfrak J_t$ on an inner end region $E$ by
\begin{equation}
 \mathfrak J_t(E)=P(E;\mathring M)
 -\tanh t\int_S\bigl(\operatorname{Tr}_S\chi_E
 -\operatorname{Tr}_S\chi_{\Omega(B)}\bigr)\,dA_S.
 \label{eq:renormalized-J}
\end{equation}
For every inner end region $E$, the trace difference in \eqref{eq:renormalized-J} is supported on a compact portion of $S$, so the integral is finite. Changing the reference disk adds a constant independent of $E$.

\begin{proposition}
\label{prop:localized-outer-barrier}
For every $t\in(0,t_0)$, the following assertions hold.
\begin{enumerate}
 \item[(i)] If $G\subset\Omega(\Gamma_1)$ has finite perimeter and finite
 measure and contains a one-sided neighborhood of $\Gamma_1$ in
 $\Omega(\Gamma_1)$, then adjoining $K_0$ does not increase $\mathscr F_t$:
 \begin{equation}
  \mathscr F_t(G\cup K_0)\leq\mathscr F_t(G).
  \label{eq:outer-collar-replacement}
 \end{equation}
 Equality holds if and only if $K_0\subset G$ almost everywhere.

 \item[(ii)] If $F$ is an inner end region, then truncation at
 either fixed leaf does not increase $\mathfrak J_t$:
 \begin{equation}
  \mathfrak J_t\bigl(F\cap\Omega(\Gamma_i)\bigr)
  \leq\mathfrak J_t(F),\qquad i=0,1.
  \label{eq:global-outer-localization}
 \end{equation}
 For each $i$, equality holds if and only if
 $F\setminus\Omega(\Gamma_i)$ has measure zero.

 \item[(iii)] Every $\mathscr F_t$-minimizer $G$ among finite-perimeter sets
 $G'$ satisfying
 \[
  K_0\subset G'\subset\Omega(\Gamma_1),
  \qquad |G'|<\infty,
 \]
 contains a slab extending across $\Gamma_0$: there is a minimal leaf
 $\Gamma_-\prec\Gamma_0$ in the extension across $\Gamma_0$ such that
 \[
  \Omega(\Gamma_1)\setminus\Omega(\Gamma_-)\subset G
  \quad\text{almost everywhere}.
 \]
\end{enumerate}
\end{proposition}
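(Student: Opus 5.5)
The plan is a calibration argument based on the unit vector field $X=\bar\nabla v/|\bar\nabla v|$ from Lemma~\ref{lem:finite-slab-calibration}. By \eqref{eq:calibration-interior}, $X$ is divergence free, and by \eqref{eq:calibration-wall}, $-X\cdot\nu_S>\tanh t_0>\tanh t$ on the lateral wall, since $t<t_0$. The field $X$ is also defined on the extension of the foliation across $\Gamma_0$. On each leaf $X$ is the unit normal pointing toward $\mathcal E_+$. I will use $\operatorname{div}X=0$ and \eqref{eq:calibration-wall} on the whole region swept by the foliation; if necessary, I first shrink the extension slightly so that the strict inequality persists there.

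\emph{Part (i).} Set $W=K_0\setminus G$. Apply the divergence theorem for sets of finite perimeter to $W$ and $X$, written in the decomposed form \eqref{eq:trace-perimeter-completion}. The boundary of $W$ splits into four parts:
\begin{itemize}
 \item the part on $\Gamma_1$, which is empty because $G$ contains a collar of $\Gamma_1$;
 \item the part on $\Gamma_0$, where $X$ is the inward normal of $K_0$;
 \item the part on $S\cap K_0$, weighted by $\operatorname{Tr}_S\chi_W$;
 \item the reduced boundary of $G$ inside $K_0$.
\end{itemize}
The flux identity then reads
\[
 0=-|\partial^*W\cap\Gamma_0|+\int_{S}X\cdot\nu_S\,\operatorname{Tr}\chi_W+\int_{\partial^*G\cap \mathring K_0}X\cdot\nu_W .
\]
Comparing $\mathscr F_t(G\cup K_0)$ with $\mathscr F_t(G)$, we gain $|\partial^*W\cap\Gamma_0|$ in perimeter and lose $P(G;\mathring K_0)$ together with $\tanh t\int_S\operatorname{Tr}\chi_W$. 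We have $|X\cdot\nu_W|\le1$, and the wall term is at most $-\tanh t_0\int\operatorname{Tr}\chi_W$. These two bounds give
\[
 \mathscr F_t(G\cup K_0)-\mathscr F_t(G)\le-(\tanh t_0-\tanh t)\int_S\operatorname{Tr}\chi_W-\int(1-|X\cdot\nu_W|)\le0 .
\]
Now suppose equality holds. Then $W$ has zero trace on $S$ and its interior boundary is calibrated, so it is a union of leaves. Since $G$ contains a collar near $\Gamma_1$, any such leaf forces $W$ to be a slab touching $S$. Such a slab has positive trace, a contradiction, so $|W|=0$.

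\emph{Part (ii).} This is the same argument with the roles reversed. Take $W=F\setminus\Omega(\Gamma_i)$, which lies in the foliated region and extends out to $\mathcal E_+$. It is unbounded, but $F$ excludes a full tail of $\mathcal E_+$, so $W$ is contained in some $\{v\le T\}$ and the divergence theorem applies. The flux of $X$ now gives $|\Gamma_i\cap\partial^*W|\le P(F;\text{outside }\overline{\Omega(\Gamma_i)})-\tanh t_0\int\operatorname{Tr}\chi_W$, where the sign is arranged so that removing $W$ decreases the perimeter term. Removing $W$ also increases the $\mathfrak J_t$ trace term by $\tanh t\int\operatorname{Tr}\chi_W$, which is strictly dominated. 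The equality case follows exactly as in (i).

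\emph{Part (iii).} Let $G$ be a minimizer. For leaves $\Gamma_s\prec\Gamma_0$ in the extension, set $W_s=(\Omega(\Gamma_0)\setminus\Omega(\Gamma_s))\setminus G$. Repeating the calibration of (i) on the slab between $\Gamma_s$ and $\Gamma_0$, which is legitimate because $K_0\subset G$ supplies the required collar at $\Gamma_0$, I expect the estimate
\[
 \mathscr F_t\bigl(G\cup(\Omega(\Gamma_1)\setminus\Omega(\Gamma_s))\bigr)\le\mathscr F_t(G)-c\int_S\operatorname{Tr}\chi_{W_s}+|\partial^*W_s\cap\Gamma_s| .
\]
The boundary term on $\Gamma_s$ is the main obstacle. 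My approach is a density and slicing argument. Because $G\supset K_0$ has density one near $\Gamma_0$, the slices $|\Gamma_s\setminus G|$ tend to $0$ as $s\uparrow t_0$ for a.e.\ $s$, which follows from the coarea formula and the BV trace. If the desired slab never appeared, minimality would have to hold against these competitors for every $s$. Integrating the resulting inequality in $s$ turns it into a differential inequality of the form $\int\operatorname{Tr}\chi_{W_s}+P(G;\cdot)\lesssim\frac{d}{ds}|W_s|$. Coupled with a relative isoperimetric inequality in the thin slab, this forces $|W_s|=0$ for $s$ close to $t_0$. Choosing such an $s$ gives $\Gamma_-=\Gamma_s$.
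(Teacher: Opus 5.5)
Your parts (i) and (ii) follow the paper's proof. In both you apply the divergence theorem to $X=\bar\nabla v/|\bar\nabla v|$, on $K_0\setminus G$ and on the bounded set $F\setminus\Omega(\Gamma_i)$ respectively, use the strict wall inequality, and settle equality by the calibrated-boundary and positive-trace argument. There is one bookkeeping slip. Adjoining $K_0$ \emph{increases} the trace term of $\mathscr F_t$ by $\tanh t\int_S\operatorname{Tr}_S\chi_W\,dA_S$. What beats it is the wall flux, which is at most $-\tanh t_0\int_S\operatorname{Tr}_S\chi_W\,dA_S$. Your final displayed inequality has the right signs.

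The gap is in (iii). You treat $|\partial^*W_s\cap\Gamma_s|$ as an uncontrolled error, and this drives the whole slicing argument, but it is not an obstacle.

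\begin{itemize}
\item Since $K_0\subset G$, one has $W_s=K_-\setminus G$ a.e., with $K_-=\overline{\Omega(\Gamma_1)\setminus\Omega(\Gamma_s)}$.
\item Your computation in (i) applies verbatim with $K_-$ in place of $K_0$. The only hypothesis there is a collar of $G$ at the \emph{outer} face $\Gamma_1$, and that holds.
\item On the inner leaf $\Gamma_s$ the normal is $X$. So the flux identity contributes exactly $-|\partial^*(K_-\setminus G)\cap\Gamma_s|$, which pays for the new perimeter created on $\Gamma_s$, just as the $\Gamma_0$ face did in (i).
\end{itemize}

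Choose $\Gamma_-=\Gamma_s$ close enough to $\Gamma_0$ that $-X\cdot\nu_S>\tanh t$ on $S\cap K_-$. This is your own shrinking remark, justified by compactness and $t<t_0$. You then get $\mathscr F_t(G\cup K_-)\le\mathscr F_t(G)$, with equality only if $|K_-\setminus G|=0$. Since $G\cup K_-$ is an admissible competitor, minimality forces equality. This is exactly the paper's proof.

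The route you propose instead does not close, for two reasons.
\begin{itemize}
\item The slice decay is unjustified. Your argument uses only $K_0\subset G$, and such a set can have vanishing inner trace on $\Gamma_0$. For example, if $G$ coincides with $K_0$ near $\Gamma_0$, then $|\Gamma_s\setminus G|\to|\Gamma_0|\neq0$.
\item The differential inequality cannot force vanishing. Suppose $|W_s|^{2/3}\lesssim-\frac{d}{ds}|W_s|$ with $|W_s|\to0$ as $s\uparrow t_0$. Integrating only gives the lower bound $|W_s|^{1/3}\gtrsim t_0-s$ whenever $W_s$ is nonempty, never $|W_s|=0$.
\end{itemize}
The analogous ODE argument for the tail volume (boundedness of minimizers) works because there the volume must reach zero in the direction of integration. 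Here the conclusion comes from the strict wall margin $\tanh t_0-\tanh t>0$ and the calibration, not from isoperimetry.
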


\begin{proof}
\emph{(i).}
For a smooth set transverse to the leaves and to $S$, the divergence theorem
and \eqref{eq:calibration-interior} give
\begin{align}
 0
 &=\int_{K_0\setminus G}
 \operatorname{div}\left(\frac{\bar\nabla v}{|\bar\nabla v|}\right)
 \notag\\
 &=-\int_{\partial^*G\cap\mathring K_0}
 \frac{\bar\nabla v}{|\bar\nabla v|}\cdot\nu_G\,d\mathcal H^2
 +\int_{S\cap(K_0\setminus G)}
 \frac{\bar\nabla v}{|\bar\nabla v|}\cdot\nu_S\,dA_S -\mathcal H^2\bigl(\Gamma_0\cap\partial(K_0\setminus G)\bigr)
\end{align}
Then
\begin{align}
 &\mathscr F_t(G)-\mathscr F_t(G\cup K_0)
 \\=&P(G;\mathring M)-P(G\cup K_0;\mathring M) +\tanh t\int_S
 \left(\operatorname{Tr}_S\chi_G
 -\operatorname{Tr}_S\chi_{G\cup K_0}\right)dA_S
 \notag\\
 \geq&
 \mathcal H^2(\partial^*G\cap\mathring K_0)
 -\mathcal H^2\bigl(\Gamma_0\cap\partial(K_0\setminus G)\bigr) -\tanh t\,
 \mathcal H^2\bigl(S\cap(K_0\setminus G)\bigr)
 \notag\\
 =&
 \int_{\partial^*G\cap\mathring K_0}
 \left(1+\frac{\bar\nabla v}{|\bar\nabla v|}
 \cdot\nu_G\right)\,d\mathcal H^2
 +\int_{S\cap K_0}
 \left(-\frac{\bar\nabla v}{|\bar\nabla v|}
 \cdot\nu_S-\tanh t\right)
 \operatorname{Tr}_S\chi_{K_0\setminus G}\,dA_S\geq 0.
 \label{eq:outer-calibration-defect}
\end{align}
Indeed,
\[  -\frac{\bar\nabla v}{|\bar\nabla v|}\cdot\nu_S-\tanh t
 >\tanh t_0-\tanh t>0
\]
by \eqref{eq:calibration-wall}, and $0<t<t_0$. For an arbitrary
finite-perimeter set, apply the Gauss--Green formula directly to
$K_0\setminus G$. Together with the BV perimeter formula for unions, it gives
the same inequality with reduced boundaries and traces.
Thus \eqref{eq:outer-calibration-defect} and its two nonnegative defect terms
remain valid; compare \cite[Lemma~16]{EK}.

If equality holds, then
\[
 \frac{\bar\nabla v}{|\bar\nabla v|}\cdot\nu_G=-1
 \quad\mathcal H^2\text{-a.e. on }
 \partial^*G\cap\mathring K_0,
 \qquad
 \operatorname{Tr}_S\chi_{K_0\setminus G}=0
 \quad\mathcal H^2\text{-a.e. on }S\cap K_0.
\]
Thus $\partial^*(K_0\setminus G)$ is calibrated by
$\bar\nabla v/|\bar\nabla v|$. The BV constancy theorem for the
zero-mean-curvature foliation implies that any nonzero component of
$K_0\setminus G$ has nonzero lateral trace. The second equality therefore gives
$|K_0\setminus G|=0$.  The converse is immediate.

\medskip
\emph{(ii).}
Fix $i\in\{0,1\}$.  Since $F$ excludes a full tail of $\mathcal E_+$,
$F\setminus\Omega(\Gamma_i)$ is bounded.  The same direct Gauss--Green
calculation gives
\begin{align}
 \mathfrak J_t(F)
 -\mathfrak J_t\bigl(F\cap\Omega(\Gamma_i)\bigr)
 \geq{}&\int_{\partial^*F\setminus\Omega(\Gamma_i)}
 \left(1-\frac{\bar\nabla v}{|\bar\nabla v|}
 \cdot\nu_F\right)\,d\mathcal H^2\notag\\
 &+\int_{S_i^+}
 \left(-\frac{\bar\nabla v}{|\bar\nabla v|}
 \cdot\nu_S-\tanh t\right)
 \operatorname{Tr}_S\chi_{F\setminus\Omega(\Gamma_i)}\,dA_S,
 \label{eq:global-calibration-defect}
\end{align}
Here $S_i^+$ is the component of $S\setminus\partial\Gamma_i$ containing
$\mathcal E_+$.
 Both integrands are nonnegative, and the second is strictly
positive wherever the trace of $F\setminus\Omega(\Gamma_i)$ is nonzero, by
\eqref{eq:calibration-wall}.

If equality holds, then
\[
 \frac{\bar\nabla v}{|\bar\nabla v|}\cdot\nu_F=1
 \quad\mathcal H^2\text{-a.e. on }
 \partial^*F\setminus\Omega(\Gamma_i),
 \qquad
 \operatorname{Tr}_S\chi_{F\setminus\Omega(\Gamma_i)}=0
 \quad\mathcal H^2\text{-a.e. on }S_i^+.
\]
As in (i), combine the BV constancy theorem for the zero-mean-curvature
foliation with boundedness and the zero lateral trace. This gives
$|F\setminus\Omega(\Gamma_i)|=0$.  The converse is immediate.

\medskip
\emph{(iii).}
By Lemma~\ref{lem:finite-slab-calibration}, the zero-mean-curvature foliation
extends smoothly to the inner side of $\Gamma_0$. Since $t<t_0$ and
\[
 -\frac{\bar\nabla v}{|\bar\nabla v|}\cdot\nu_S>\tanh t_0
\]
on the lateral support at and outside $\Gamma_0$, compactness and continuity
give an inner leaf $\Gamma_-\prec\Gamma_0$ sufficiently close to
$\Gamma_0$. Set
\[
 K_-=\overline{\Omega(\Gamma_1)\setminus\Omega(\Gamma_-)}.
\]
Throughout $K_-$,
\[
 \operatorname{div}\!\left(
 \frac{\bar\nabla v}{|\bar\nabla v|}
 \right)=0,
 \qquad
 -\frac{\bar\nabla v}{|\bar\nabla v|}\cdot\nu_S>\tanh t.
\]
Applying the comparison from (i) to $K_-\setminus G$ gives
\[
 \mathscr F_t(G\cup K_-)\leq\mathscr F_t(G).
\]
The set $G\cup K_-$ is admissible, so the $\mathscr F_t$-minimality of $G$
forces equality.
The same equality argument applied to $K_-$ gives $|K_-\setminus G|=0$.
Thus (iii) follows.
\end{proof}

We now use the exterior barrier to minimize $\mathscr F_t$. Fix $t\in(0,t_0)$, and let $E$ be an inner end region. To avoid the loss of compactness caused by the negative trace term in \eqref{eq:renormalized-J}, pass to its complement in $\Omega(\Gamma_1)$:
\begin{equation}
 G=\Omega(\Gamma_1)\setminus E.
\end{equation}

If $G$ contains a relative neighborhood of $\Gamma_1$ in
$\Omega(\Gamma_1)$, then
 \begin{align}
 \mathscr F_t(G) =\tanh t\,P(G;\mathbb R^3)
 +(1-\tanh t)P(G;\mathring M) \label{eq:full-perimeter-identity} .
 \end{align}

Define the admissible class
\begin{equation}
 \mathscr G(\Gamma_0,\Gamma_1)=
 \left\{G\subset\Omega(\Gamma_1):
 G\text{ has locally finite perimeter},
 K_0\subset G\text{ a.e.},\quad |G|<\infty
 \right\}.
 \label{eq:admissible-G-class}
\end{equation}
No boundedness assumption is imposed on $G$.

\begin{proposition}
\label{prop:canonical-F-minimizer}
For every $t\in(0,t_0)$, the functional $\mathscr F_t$ admits a unique
smallest $\mathscr F_t$-minimizer $G_t$ in $\mathscr G(\Gamma_0,\Gamma_1)$.
Every $\mathscr F_t$-minimizer is bounded in $\mathbb R^3$. Moreover, if $G$
is any $\mathscr F_t$-minimizer, then
$G_t\subset G$ almost everywhere.
\end{proposition}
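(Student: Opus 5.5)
The plan is to run the direct method, using the coercivity identity \eqref{eq:full-perimeter-identity} to obtain Euclidean compactness. Then I would use Euclidean isoperimetry to show boundedness, and the submodularity of perimeter (lattice comparison) to produce the smallest minimizer.

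\emph{Existence.} Let $G_k\in\mathscr G(\Gamma_0,\Gamma_1)$ be a minimizing sequence, and put $\alpha=\inf\mathscr F_t<\infty$. The class contains $K_0$, so $\alpha$ is finite. By Proposition~\ref{prop:localized-outer-barrier}(i), we may assume $K_0\subset G_k$. Every $G_k$ then contains a one-sided neighborhood of $\Gamma_1$, so \eqref{eq:full-perimeter-identity} gives
\[
P(G_k;\mathbb R^3)\leq \mathscr F_t(G_k)/\tanh t,
\]
which is uniformly bounded. The isoperimetric inequality then bounds $|G_k|$ uniformly. BV compactness gives $L^1_{\mathrm{loc}}$ subconvergence to some $G$. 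To rule out loss of mass at infinity, I would show uniform boundedness first, as below, or else take a concentration-compactness step: pieces escaping to infinity have their perimeter counted fully in $\mathscr F_t$, since $\tanh t<1$ makes splitting costly. I prefer the route through a priori boundedness. Lower semicontinuity holds for both $P(\cdot;\mathbb R^3)$ and $P(\cdot;\mathring M)$, so $\mathscr F_t$ is lower semicontinuous by \eqref{eq:full-perimeter-identity}. The constraints $K_0\subset G\subset\Omega(\Gamma_1)$ and $|G|<\infty$ pass to the limit, and $G$ is a minimizer.

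\emph{Boundedness.} Let $G$ be a minimizer and set $V(r)=|G\setminus B_r|$. I would compare $G$ with $G\cap B_r$ for $r$ so large that $K_0\subset B_r$. This set is admissible. By \eqref{eq:full-perimeter-identity}, and since the slice contributes at most $\mathcal H^2(G\cap\partial B_r)=-V'(r)$, we get
\[
\tanh t\,P(G\setminus \overline B_r;\mathbb R^3)\leq \mathscr F_t(G)-\mathscr F_t(G\cap B_r)+2\mathcal H^2(G\cap\partial B_r)\le 2|V'(r)|
\]
for a.e.\ $r$. The bookkeeping here is to split the full perimeter outside $B_r$ into its $\mathring M$ part and its trace part. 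Isoperimetry then gives $c\,V(r)^{2/3}\le |V'(r)|$, and this differential inequality forces $V\equiv0$ beyond a finite radius. The uniform bound holds for any set whose energy is within a fixed amount of $\alpha$ after truncation. Running the same truncation on the minimizing sequence therefore lets me assume that the $G_k$ are uniformly bounded, which closes the existence step.

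\emph{Smallest minimizer.} If $G,G'$ are minimizers, then $G\cap G'$ and $G\cup G'$ are admissible. Submodularity of relative perimeter, together with linearity of the trace term, gives
\[
\mathscr F_t(G\cap G')+\mathscr F_t(G\cup G')\le\mathscr F_t(G)+\mathscr F_t(G')=2\alpha,
\]
so both are minimizers. Let $\beta$ be the infimum of $|G|$ over minimizers, which all lie in a fixed ball by the uniform bound. Take minimizers $H_k$ with $|H_k|\downarrow\beta$, and replace them by the decreasing intersections $H_1\cap\cdots\cap H_k$, which are still minimizers. These converge in $L^1$ to $G_t$, which is a minimizer by lower semicontinuity and has $|G_t|=\beta$. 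For any minimizer $G$, the set $G\cap G_t$ is a minimizer of volume at most $\beta$. Hence $|G_t\setminus G|=0$. Uniqueness follows because two smallest minimizers are contained in each other almost everywhere.

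The main obstacle is the boundedness estimate. The slicing inequality has to be derived carefully for the two-part functional, paying attention to how the trace term on $S$ behaves under truncation by $B_r$ near the uncontrolled ends. It must also be made uniform along the minimizing sequence, so that mass cannot escape to infinity through the ends $\mathcal E_j$.
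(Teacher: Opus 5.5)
Your proposal is correct and follows the paper's proof essentially step for step: coercivity from \eqref{eq:full-perimeter-identity} together with isoperimetry and BV compactness gives existence, truncation by $B_r$ with the coarea/isoperimetric differential inequality gives boundedness, and submodularity plus volume minimization among minimizers produces $G_t$ (your decreasing-intersection construction is a slightly more explicit version of the paper's ``same direct-method argument''). The only superfluous part is your worry about mass escaping to infinity. Since $\mathscr G(\Gamma_0,\Gamma_1)$ carries no volume constraint, Fatou's lemma and lower semicontinuity already make the $L^1_{\mathrm{loc}}$ limit an admissible minimizer, exactly as in the paper, so the sketched uniform truncation of the minimizing sequence is unnecessary. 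As stated, that truncation step would in any case need the energy defect to tend to zero, not merely stay bounded.
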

\begin{proof}
Let $\{G_j\}$ be a minimizing sequence of $\mathscr F_t$. Then
$\sup_j\mathscr F_t(G_j)<\infty$. Moreover,
\begin{equation}
 \sup_j\left(P(G_j;\mathbb R^3)+|G_j|\right)<\infty.
 \label{eq:global-bv-bound}
\end{equation}
Here we used the Euclidean isoperimetric inequality and
\eqref{eq:full-perimeter-identity}, which give
\[
 |G_j|^{2/3}\leq C_{\mathrm{iso}}P(G_j;\mathbb R^3)
 \leq C_{\mathrm{iso}}(\tanh t)^{-1}\mathscr F_t(G_j).
\]
For each $R>0$, BV compactness gives a subsequence converging in
$L^1(B_R)$.  A diagonal argument yields a measurable set $G$ such that
\[
 \chi_{G_j}\longrightarrow\chi_G
 \quad\text{in }L^1_{\mathrm{loc}}(\mathbb R^3).
\]
Because $K_0\subset G_j$ almost everywhere, the same inclusion holds for
$G$. Fatou's lemma and \eqref{eq:global-bv-bound} give $|G|<\infty$.
The condition $G\subset\Omega(\Gamma_1)$ also passes to the limit.

For every ball $B_R$, lower semicontinuity gives
\begin{align*}
 &\tanh t\,P(G;B_R)+(1-\tanh t)P(G;\mathring M\cap B_R)\\
 \le&\liminf_{j\to\infty}
 \left(\tanh t\,P(G_j;B_R)
 +(1-\tanh t)P(G_j;\mathring M\cap B_R)\right).
\end{align*}
Letting $R\to\infty$  yields
\[
 \mathscr F_t(G)\leq\liminf_{j\to\infty}\mathscr F_t(G_j).
\]
Thus $G\in\mathscr G(\Gamma_0,\Gamma_1)$ is an $\mathscr F_t$-minimizer.

Next, we show that every $\mathscr F_t$-minimizer is bounded. Let
$G$ be an $\mathscr F_t$-minimizer and choose $R_0$ with
$K_0\subset B_{R_0}$. For
$r>R_0$, set
\[
 V(r)=|G\setminus B_r|,\qquad
 A(r)=\mathcal H^2(G\cap\partial B_r).
\]
For almost every $r$, the coarea theorem gives
\begin{equation}
 -V'(r)=A(r),
 \label{eq:tail-coarea}
\end{equation}
and $G\cap B_r$ is a finite-perimeter member of
$\mathscr G(\Gamma_0,\Gamma_1)$.

Direct computation gives
\begin{align}
 0
 &\leq \mathscr F_t(G\cap B_r)-\mathscr F_t(G)\notag\\
 &\leq \tanh t\bigl(P(G;B_r)+A(r)-P(G;\mathbb R^3)\bigr)\notag\\
 &\quad +(1-\tanh t)
 \bigl(P(G;\mathring M\cap B_r)+A(r)-P(G;\mathring M)\bigr)\notag\\
 &=A(r)-\tanh t\,|D\chi_G|
 (\mathbb R^3\setminus\overline{B_r})
 -(1-\tanh t)|D\chi_G|
 (\mathring M\setminus\overline{B_r}),
 \label{eq:weighted-tail-bound}
\end{align}
which implies
\begin{align}
P(G;\mathbb R^3\setminus\overline{B_r})\le(\tanh t)^{-1} A(r).
\end{align}
Consequently,
\begin{align}
 P(G\setminus B_r;\mathbb R^3)
 &\leq P(G;\mathbb R^3\setminus\overline{B_r})+A(r)\notag\\
 &\leq\bigl(1+(\tanh t)^{-1}\bigr)A(r).
 \label{eq:tail-perimeter}
\end{align}

Set $C_t=C_{\mathrm{iso}}\bigl(1+(\tanh t)^{-1}\bigr)$.  The Euclidean
isoperimetric inequality and \eqref{eq:tail-coarea} give
\begin{align}
 V(r)^{2/3}
 &=|G\setminus B_r|^{2/3}
  \leq C_{\mathrm{iso}}P(G\setminus B_r;\mathbb R^3)
  \leq C_tA(r)=-C_tV'(r).
 \label{eq:finite-extinction-ode}
\end{align}
for almost every $r>R_0$ with $V(r)>0$.  Hence
\begin{align*}
 V(r)^{1/3}
 &\leq V(R_0)^{1/3}-\frac{r-R_0}{3C_t}
\end{align*}
as long as $V(r)>0$.  Thus $G$ is bounded.

By
\begin{equation}
 \mathscr F_t(G\cap H)+\mathscr F_t(G\cup H)
 \leq\mathscr F_t(G)+\mathscr F_t(H),
 \label{eq:F-lattice}
\end{equation}
the intersection and union of two $\mathscr F_t$-minimizers are again
$\mathscr F_t$-minimizers. The same direct-method argument shows that their
volumes attain an infimum. Let $G_t$ be a volume minimizer among the
$\mathscr F_t$-minimizers.

If $G$ is any other $\mathscr F_t$-minimizer, then $G_t\cap G$ is an
$\mathscr F_t$-minimizer.  Since $G_t$ is a volume minimizer,
$|G_t\cap G|\geq|G_t|$,
and hence $G_t\subset G$ almost everywhere.  This proves both the smallest
property and uniqueness.
\end{proof}

Set
\begin{equation}
 E_t=\Omega(\Gamma_1)\setminus G_t,\qquad
 \Sigma_t=
 \overline{\partial^*E_t\cap\mathring M}\setminus\Gamma_1.
 \label{eq:Et-Sigmat}
\end{equation}

\begin{figure}[htbp]
\centering
\begin{tikzpicture}[x=1.45cm,y=1.25cm]
 \begin{scope}
  \clip (0,2.20) .. controls (2.0,2.34) and (4.0,2.08) .. (7.4,2.20)
  -- (7.4,0.52) .. controls (4.0,0.42) and (2.0,0.30) .. (0,0.50) -- cycle;
  \fill[gray!12] (-0.1,0.2) rectangle (3.0,2.5);
  \fill[gray!30] (3.0,0.2) rectangle (6.55,2.5);
  \fill[gray!5] (6.55,0.2) rectangle (7.5,2.5);
 \end{scope}
 \draw[very thick]
 (0,2.20) .. controls (2.0,2.34) and (4.0,2.08) .. (7.4,2.20);
 \draw[very thick]
 (0,0.50) .. controls (2.0,0.30) and (4.0,0.42) .. (7.4,0.52);
 \node[above] at (0.8,2.24) {$S$};

 \draw[thick,dashed]
 (3.0,0.39) .. controls (2.86,0.90) and (2.86,1.72) .. (3.0,2.20);
 \node[below] at (3.0,0.36) {$\Sigma_t$};

 \draw[very thick]
 (5.20,0.46) .. controls (5.08,0.98) and (5.08,1.65) .. (5.20,2.13);
 \node[above] at (5.20,2.14) {$\Gamma_0$};
 \draw[very thick]
 (6.55,0.48) .. controls (6.43,0.98) and (6.43,1.68) .. (6.55,2.16);
 \node[above] at (6.55,2.17) {$\Gamma_1$};

 \node at (1.45,1.36) {$E_t$};
 \node at (4.08,1.36) {$G_t$};
 \draw[thin] (5.20,0.16) -- (5.20,0.04) -- (6.55,0.04) -- (6.55,0.16);
 \node[below] at (5.88,0.04) {$K_0\subset G_t$};

 \draw[->] (1.05,0.94) -- (0.08,0.94);
 \node[below] at (0.55,0.94) {other ends};
 \draw[->] (6.85,1.35) -- (7.55,1.35);
 \node[below] at (7.15,1.35) {$\mathcal E_+$};
 \draw[->] (5.36,2.58) -- (6.39,2.58);
 \node[above] at (5.88,2.58) {$\Gamma_0\prec\Gamma_1$};
\end{tikzpicture}
\caption{The sets $E_t$ and $G_t$ in $\Omega(\Gamma_1)$.
The bounded set $G_t$ contains the fixed slab $K_0$, while the inner end region
$E_t$ contains the remaining ends.}
\end{figure}

\begin{proposition}
\label{prop:capillary-separator}
There are fixed admissible disks $\Gamma_0\prec\Gamma_1$ and $t_0>0$ with
the following property. For every $t\in(0,t_0)$, the smallest
$\mathscr F_t$-minimizer $G_t$ determines a nonempty compact smooth properly
embedded strongly stable minimal capillary separator
$\Sigma_t\in\mathcal A_S$. Every component of $\Sigma_t$ is a disk, and
\[
 \langle\nu_{\Sigma_t},\nu_S\rangle=-\tanh t.
\]
Let $S_t^+$ be the component of $S\setminus\partial\Sigma_t$ containing $\mathcal E_+$. Then $\mathcal E_+$ is its only
end, and the components of $\partial S_t^+$ are precisely the contact curves
of $\Sigma_t$.  The separators satisfy
\begin{equation}
 A_S\leq|\Sigma_t|
 \leq|\Gamma_0|
 +\tanh t\,|S\cap(\Omega(\Gamma_1)\setminus\Omega(\Gamma_0))|,
 \label{eq:capillary-area-bounds}
\end{equation}
and $E_t$ is strictly outer $\mathfrak J_t$-minimizing: every inner end region $F$
enclosing $E_t$ satisfies
\begin{equation}
 \mathfrak J_t(F)\geq\mathfrak J_t(E_t),
 \label{eq:strict-outer-J}
\end{equation}
with equality if and only if $F=E_t$ almost everywhere.
\end{proposition}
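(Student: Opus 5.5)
I plan to take $\Gamma_0\prec\Gamma_1$ and $t_0$ from Lemma~\ref{lem:finite-slab-calibration}. For each $t\in(0,t_0)$ I take $G_t$ from Proposition~\ref{prop:canonical-F-minimizer}, then extract $E_t$ and $\Sigma_t$ as in \eqref{eq:Et-Sigmat} and verify the listed properties in order.

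\emph{Regularity and nonemptiness.} By Proposition~\ref{prop:localized-outer-barrier}(iii), $G_t$ contains the slab $\Omega(\Gamma_1)\setminus\Omega(\Gamma_-)$. Hence $\partial^*G_t\cap\mathring M$ stays at positive distance from $\Gamma_0$ and $\Gamma_1$. By Proposition~\ref{prop:canonical-F-minimizer}, $G_t$ is bounded, so $\Sigma_t$ is compact. Near a point of $\partial^*G_t$, $G_t$ minimizes the capillary energy $P(\cdot;\mathring M)+\tanh t\int_S\operatorname{Tr}_S\chi$ under compactly supported perturbations, because the constraints $K_0\subset G\subset\Omega(\Gamma_1)$ are inactive there. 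I will then invoke the capillary regularity theory \cite{DePhilippisMaggi,ChodoshEdelenLi}. In ambient dimension three the singular set is empty, so $\Sigma_t$ is smooth and properly embedded, meets $S$ transversally at contact angle $\langle\nu_{\Sigma_t},\nu_S\rangle=-\tanh t$, and has $H=0$. The sign of the angle is fixed by the orientation in which $\mathscr F_t$ charges the wetted part of $S$ inside $G_t$. Strong stability is the nonnegativity of the second variation at a minimizer. For nonemptiness: if $\Sigma_t=\varnothing$, then $G_t$ would be a union of components of $\Omega(\Gamma_1)$ with finite volume containing $K_0$. But every piece of $\Omega(\Gamma_1)$ adjacent to $K_0$ reaches an end $\mathcal E_j$ of infinite volume. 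This contradicts boundedness.

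\emph{Admissibility, area bounds, topology.} Set $\Omega(\Sigma_t)=E_t$. It contains tails of all $\mathcal E_j$ and excludes $\mathcal E_+$, so $\Sigma_t\in\mathcal A_S$ once I rule out closed components. A closed minimal component bounds a compact region, and by the maximum principle with Euclidean convex hull it cannot exist. This gives $A_S\le|\Sigma_t|$. For the upper bound in \eqref{eq:capillary-area-bounds}, I compare with the competitor $G'=K_0$, using $\mathscr F_t(G_t)\le\mathscr F_t(K_0)=|\Gamma_0|+\tanh t\,|S\cap K_0|$. Since $\mathscr F_t(G_t)\ge|\Sigma_t|$, the bound follows. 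For the disk property and the description of $S_t^+$, I plan to use the planar topology \eqref{eq:topology}. Every compact properly embedded surface in $\overline M$ with boundary on the planar $S$ is separating. If a component of $\Sigma_t$ had genus or several boundary curves, some boundary curve would bound a disk in $S$ that does not contain the other ends, or a component would not meet $\partial(\text{$\mathcal E_+$-component})$. In either case I would cap or delete that part to get a competitor $G'\supset$ or $\subset G_t$ with strictly smaller $\mathscr F_t$, or with equal energy and smaller volume. That contradicts minimality or smallestness. The same deletion argument shows every contact curve lies on $\partial S_t^+$. Since $G_t$ is bounded, $S_t^+$ has only the end $\mathcal E_+$.

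\emph{Strict outer minimality.} Let $F\supset E_t$ be an inner end region. By Proposition~\ref{prop:localized-outer-barrier}(ii), $\mathfrak J_t(F\cap\Omega(\Gamma_1))\le\mathfrak J_t(F)$, with equality only if $F\subset\Omega(\Gamma_1)$. Set $G=\Omega(\Gamma_1)\setminus(F\cap\Omega(\Gamma_1))\subset G_t$. The constraint $K_0\subset G$ may fail, so I first replace $G$ by $G\cup K_0$ using Proposition~\ref{prop:localized-outer-barrier}(i). Then $\mathfrak J_t(F)-\mathfrak J_t(E_t)=\mathscr F_t(G)-\mathscr F_t(G_t)\ge0$, since the correspondence $E\leftrightarrow G$ changes energies by an additive constant. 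Equality forces equality in (i) and (ii) and makes $G$ an $\mathscr F_t$-minimizer contained in $G_t$. By the smallest property, $G=G_t$, i.e.\ $F=E_t$ a.e. I expect the main obstacle to be the topological step: showing every component is a disk whose contact curves lie on $\partial S_t^+$. That requires careful cut-and-paste competitors which respect the constraint class and use smallestness in the equality cases.
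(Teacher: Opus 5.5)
\textbf{The disk property is a genuine gap.} You try to get ``every component of $\Sigma_t$ is a disk'' from the planarity of $S$ and cut-and-paste competitors, but this does not work.

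Your dichotomy is false. Take a component of genus one whose single contact curve separates $p_+$ from $p_1,\dots,p_q$. None of its contact curves bounds an end-free disk in $S$, and it does lie on the boundary of the $\mathcal E_+$-component. So neither alternative applies, and there is nothing to cap or delete.

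Even when a contact curve bounds an end-free disk $D'\subset S$, the competitors have no definite sign. In the simplest configuration, adding the enclosed region to $G_t$ changes $\mathscr F_t$ by $\tanh t\,|D'|-|\Sigma'|$. Compressing a tube or a handle likewise changes area by an indefinite amount. Minimality by itself does not control genus: absolutely area-minimizing surfaces spanning a Jordan curve in $\mathbb R^3$ need not be disks.

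Tellingly, $H_S\ge0$ never enters your argument, yet the conclusion depends on it. Suppose $S$ had a deep narrow ring-shaped trench, which forces $H_S<0$ near its rim. Then $G_t$ could prefer to span the trench's mouth by an annulus rather than pay $\tanh t$ times the area of its walls, with \eqref{eq:topology} unchanged.

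The missing idea is to exploit the strong stability you already derived. Test it with the constant function, and use $2K_\Sigma=-|h_\Sigma|^2$, Gauss--Bonnet, and $H_S\ge0$ along $\partial\Sigma_t$. This gives $\frac12\int_\Sigma|h_\Sigma|^2\,dA+\cosh t\int_{\partial\Sigma}H_S\,ds\le2\pi\chi(\Sigma)$ for each component, which leaves only disks and annuli. The equality case then excludes annuli (Lemma~\ref{lem:stability-euler-characteristic}, \cite[Lemma~49]{EK}).

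Once the components are known to be disks, your deletion idea finishes the description of $S_t^+$ as in the paper. A component of $G_t\cup(\overline M\setminus\Omega(\Gamma_1))$ missing $K_0$ could be removed, strictly lowering the perimeter without raising the wetted term. So this set is connected, and cutting the compactified ball along the disks shows that every contact curve bounds $S_t^+$.

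\textbf{The outer-minimality step needs a repair.} After truncating at $\Gamma_1$, you apply Proposition~\ref{prop:localized-outer-barrier}(i) to $G=\Omega(\Gamma_1)\setminus F$. But (i) assumes that $G$ contains a one-sided collar of $\Gamma_1$, and this can fail. For $F=E_t\cup\{t_1-\delta<v<t_1\}$, the calibration gives $\mathscr F_t(G)<\mathscr F_t(G\cup K_0)=\mathscr F_t(G_t)$.

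The identification of $\mathfrak J_t(F\cap\Omega(\Gamma_1))-\mathfrak J_t(E_t)$ with $\mathscr F_t(G)-\mathscr F_t(G_t)$ also needs that collar. Without it, the former exceeds the latter by twice the area of the part of $\Gamma_1$ where $F\cap\Omega(\Gamma_1)$ has trace one. The two errors happen to compensate, but the chain as written is not valid.

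Truncate at $\Gamma_0$ instead, using (ii) with $i=0$ as the paper does. Then $G_F=\Omega(\Gamma_1)\setminus(F\cap\Omega(\Gamma_0))$ automatically contains $K_0$ and a collar of $\Gamma_1$, and lies in $G_t$. So (i) is not needed, and your equality argument via the smallest minimizer goes through.

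Finally, in the upper area bound, both $\mathscr F_t(K_0)$ and $\mathscr F_t(G_t)$ contain the face term $|\Gamma_1|$. You omitted it on both sides, which is harmless only because it cancels.
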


\begin{proof}
We prove the assertions in four steps.

\medskip
\emph{Step 1.}
We first show that $E_t$ is an inner end region and that $\Sigma_t$
stays a positive distance from the fixed faces $\Gamma_0$ and $\Gamma_1$.
We then prove regularity and nonemptiness. Since $G_t$ is bounded, $E_t$ contains a full tail of
each uncontrolled end $\mathcal E_j$, $1\leq j\leq q$, and hence is an inner end region.

Proposition~\ref{prop:localized-outer-barrier}(iii) gives a fixed leaf
$\Gamma_-\prec\Gamma_0$ such that the enlarged slab between $\Gamma_-$ and
$\Gamma_1$ lies in $G_t$ almost everywhere. Hence the reduced boundary of
$G_t$ does not meet the interior of this slab. In particular,
\[
\operatorname{dist}\bigl(\Sigma_t,\Gamma_0\cup\Gamma_1\bigr)>0.
\]

We now establish the asserted regularity. Boundedness of $G_t$ and the
preceding separation show that $\Sigma_t$ is compact and
$\partial\Sigma_t\subset S$. Apply Taylor's regularity theorem for the
$\mathscr F_t$-minimizing capillarity set $G_t$ \cite{Taylor}, as in
\cite[proof of Proposition~17]{EK}. It shows that $\Sigma_t$ is smoothly
embedded up to $S$ and has no singular points. Compactness makes the
embedding proper.

The surface $\Sigma_t$ is nonempty. Otherwise, BV constancy in the connected
region $\Omega(\Gamma_1)$ would give either $G_t=\varnothing$ or
$G_t=\Omega(\Gamma_1)$ almost everywhere. The former contradicts
$K_0\subset G_t$, and the latter contradicts the boundedness of $G_t$.
Finally, every component of $\Sigma_t$ has nonempty boundary. Indeed, the
interior first variation would otherwise produce a compact minimal surface
in $\mathbb R^3$, which is impossible.

\medskip
 \emph{Step 2.}
We next derive zero mean curvature, the contact angle, and strong stability.
We also determine the topology of each component. The $\mathscr F_t$-minimality of $G_t$ and
Lemma~\ref{lem:complement-first-variation} give
\[
 H_{\Sigma_t}=0,
 \qquad
 \langle\nu_{\Sigma_t},\nu_S\rangle=-\tanh t.
\]
Every sufficiently small admissible variation remains in
$\mathscr G(\Gamma_0,\Gamma_1)$. Hence the $\mathscr F_t$-minimality of $G_t$
yields the strong capillary stability inequality in \cite[Appendix~C]{EK}.
Thus every component of $\Sigma_t$ is a two-sided stable minimal capillary
surface with nonempty boundary. Since $H_S\geq0$ along its boundary,
\cite[Lemma~49]{EK} implies that it is a disk. Compactness of $\Sigma_t$
gives finitely many components.

\medskip
 \emph{Step 3.}
We first determine the region adjacent to $\mathcal E_+$ and then prove the area bounds in
\eqref{eq:capillary-area-bounds}.  Define
\[
G_t^+:=G_t\cup\bigl(\overline M\setminus\Omega(\Gamma_1)\bigr).
\]
We use the regular open representative of $G_t^+$ determined by its smooth
interface. Then
$\Sigma_t=\overline{\partial G_t^+\cap\mathring M}$.

By \eqref{eq:topology}, compactifying the ends gives a $3$-ball. Cut this
ball along the properly embedded disk components of $\Sigma_t$. The intersection with $M$ of the region containing $p_+$ is a component of $G_t^+$ and contains $K_0$. It is the only component of $G_t^+$. Indeed, any other component would determine a
component of $G_t$ disjoint from $K_0$. Removing that component preserves
admissibility and strictly decreases the perimeter term. It cannot increase
the nonnegative trace term in $\mathscr F_t$, so this contradicts the
$\mathscr F_t$-minimality of $G_t$.

Each disk component of $\Sigma_t$ separates the region containing $p_+$ from a component of $E_t$. Successive separation by these disks shows that the incident part of
the support is connected and has every contact curve as a boundary component.
This part is $S_t^+$. Since $G_t$ is bounded, $E_t$ contains a full tail of
every remaining end. Therefore $\mathcal E_+$ is the only end of $S_t^+$ and
$\partial S_t^+=\partial\Sigma_t$.

We now prove the area bounds in \eqref{eq:capillary-area-bounds}. The surface
$\Sigma_t$ separates $\mathcal E_+$ from the remaining ends and is therefore
admissible. The definition of $A_S$ gives
\[
A_S\leq|\Sigma_t|.
\]
For the upper bound, comparison with $K_0$ gives
\begin{align*}
	|\Gamma_1|+|\Sigma_t|
	&=P(G_t;\mathring M)\leq\mathscr F_t(G_t)\leq\mathscr F_t(K_0)\\
	&=P(K_0;\mathring M)+\tanh t\int_S\operatorname{Tr}_S\chi_{K_0}\,dA_S\\
	&=|\Gamma_1|+|\Gamma_0|+\tanh t\,|S\cap(\Omega(\Gamma_1)\setminus\Omega(\Gamma_0))|.
\end{align*}
Cancelling $|\Gamma_1|$ gives the upper bound in
\eqref{eq:capillary-area-bounds}.

\medskip
\emph{Step 4.}
Finally, we prove strict outer $\mathfrak J_t$-minimality. Let $F$ be an inner end region such that $E_t\subset F$ almost everywhere.
Proposition~\ref{prop:localized-outer-barrier}(ii) gives
\[
 \mathfrak J_t\bigl(F\cap\Omega(\Gamma_0)\bigr)\leq\mathfrak J_t(F),
\]
with equality if and only if $F\setminus\Omega(\Gamma_0)$ has measure zero.
Since $K_0\subset G_t$, we have $E_t\subset\Omega(\Gamma_0)$.  Together with $E_t\subset F$, this gives
\[
E_t\subset F\cap\Omega(\Gamma_0)
\]
almost everywhere.

Passing to complements, set
\[
G_F=\Omega(\Gamma_1)\setminus\bigl(F\cap\Omega(\Gamma_0)\bigr).
\]
Then
\[
K_0\subset G_F\subset G_t,
\qquad
G_F\in\mathscr G(\Gamma_0,\Gamma_1)
\]
almost everywhere.  Since both $G_F$ and $G_t$ contain a collar of
$\Gamma_1$, using the $\mathscr F_t$-minimality of $G_t$
gives
\begin{align*}
	\mathfrak J_t(F)-\mathfrak J_t(E_t)
	&\geq\mathfrak J_t\bigl(F\cap\Omega(\Gamma_0)\bigr)-\mathfrak J_t(E_t)\\
	&=P\bigl(F\cap\Omega(\Gamma_0);\mathring M\bigr)-P(E_t;\mathring M)
	-\tanh t\int_S\left(\operatorname{Tr}_S\chi_{F\cap\Omega(\Gamma_0)}
	-\operatorname{Tr}_S\chi_{E_t}\right)dA_S\\
	&=P(G_F;\mathring M)-P(G_t;\mathring M)
	+\tanh t\int_S\left(\operatorname{Tr}_S\chi_{G_F}
	-\operatorname{Tr}_S\chi_{G_t}\right)dA_S\\
	&=\mathscr F_t(G_F)-\mathscr F_t(G_t)\geq0.
\end{align*}
Suppose equality holds. Then $F=F\cap\Omega(\Gamma_0)$ almost everywhere,
and $G_F$ is an $\mathscr F_t$-minimizer. Proposition~\ref{prop:canonical-F-minimizer}
gives $G_t\subset G_F$. On the other hand,
$E_t\subset F\cap\Omega(\Gamma_0)$ gives $G_F\subset G_t$. Hence $G_F=G_t$.
It follows that $F\cap\Omega(\Gamma_0)=E_t$ and therefore $F=E_t$ almost
everywhere.
\end{proof}

\section{Proof of the inequality}
\label{sec:outward-theory}

Throughout this section, assume $A_S>0$.

\begin{theorem}
\label{thm:capillary-mass}
For every $t\in(0,t_0)$, one has
\begin{equation}
 m_+(S)\geq\frac1{\cosh t}
 \sqrt{\frac{|\Sigma_t|}{\pi}}.
 \label{eq:capillary-mass}
\end{equation}
\end{theorem}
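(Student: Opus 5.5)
The argument reduces to the one-ended theory of \cite{EK}, applied to a support surface built from $S_t^+$. By Proposition~\ref{prop:capillary-separator}, $S_t^+$ has $\mathcal E_+$ as its only end and $\partial S_t^+=\partial\Sigma_t$. The disks of $\Sigma_t$ meet $S$ at constant angle, $\langle\nu_{\Sigma_t},\nu_S\rangle=-\tanh t$. Moreover, $E_t$ is strictly outer $\mathfrak J_t$-minimizing among inner end regions. The plan is to replace everything on the compact side of $\Sigma_t$, that is, the part of $S$ bounding $E_t$ together with the other ends, by compact mean-convex caps. The result should be a new proper surface $\widetilde S_t$ with three properties. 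It coincides with $S$ on $S_t^+$ and on a neighborhood of it in $\overline M$, so that $\Sigma_t$ is still a capillary surface in the new domain $\widetilde M_t$. It satisfies $H_{\widetilde S_t}\geq0$. And it is topologically a half-space, with one asymptotically flat end, $H\in L^1$, and the same mass $m_+$.

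\textbf{Step 1: the completion.} Each disk component $\Sigma^i$ of $\Sigma_t$ bounds a component of $E_t$. I would cap it off with a compact surface $C^i$ that meets $S$ along a small collar beyond $\partial\Sigma^i$ and lies on the $E_t$-side. The most concrete choice is to take $C^i$ close to $\Sigma^i$ itself, pushed slightly toward $E_t$, so that the union of $C^i$ with $S_t^+$ forms a mean-convex corner along a curve slightly inside $E_t$. I would then apply the corner rounding of Lemma~\ref{lem:localized-mean-convex-rounding}, exactly as in Step~2 of the proof of Lemma~\ref{lem:finite-slab-calibration}. The one point to check is that the angle condition gives a \emph{convex} corner; this should follow from $\tanh t<1$, since the contact angle is strictly between $0$ and $\pi$. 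The new domain $\widetilde M_t$ is then $G_t^+$ plus thin collars, it is diffeomorphic to a half-space, and $m_+(\widetilde S_t)=m_+(S)$ because the change is compact.

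\textbf{Step 2: transfer of outer minimality.} I need $\Sigma_t$ to be strictly outer $J_t$-minimizing in $(\widetilde M_t,\widetilde S_t)$ in the sense of \cite{EK}. Take any competitor region in $\widetilde M_t$ enclosing the compact side of $\Sigma_t$. Intersect it with $\overline{G_t^+}$ and take the union with $E_t$; this produces an inner end region $F\supset E_t$ in $M$. Its $\mathfrak J_t$ differs from the $J_t$ of the competitor by a fixed constant, because the portion inside the caps contributes identically to both. Here I expect to first modify competitors so that they contain the cap region, using that the caps are mean-convex and thus barrier-like. This is the main obstacle: controlling competitors that cross into the cap region. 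I would handle it by choosing the caps so thin that any competitor can be replaced by its union with the cap region without increasing $J_t$, via the same calibration-defect computation as in Proposition~\ref{prop:localized-outer-barrier}, using a local foliation near $\Sigma_t$. Inequality \eqref{eq:strict-outer-J} then gives the strict outer minimality.

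\textbf{Step 3: conclusion.} With Steps~1 and~2 in hand, the monotonicity of the free-energy mass along the capillary flow starting at $\Sigma_t$, and its asymptotic limit, from \cite{EK} give the one-ended estimate
\[
m_+(\widetilde S_t)\geq\frac{1}{\cosh t}\sqrt{\frac{|\Sigma_t|}{\pi}}.
\]
Since $m_+(\widetilde S_t)=m_+(S)$, this is the claimed bound \eqref{eq:capillary-mass}.
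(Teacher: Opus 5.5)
Your three steps follow the paper's proof: collar-and-push-off caps on the side of $\Sigma_t$ away from $\mathcal E_+$, transfer of strict outer minimality through $F=E_t\cup(\widetilde E_\Gamma\setminus\widetilde E_t)$ and \eqref{eq:strict-outer-J}, then the monotonicity and asymptotics of \cite{EK}. Two points need correcting.

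\textbf{Mean convexity of the caps.} In Step~1 you claim $H_{\widetilde S_t}\geq0$ on the whole completion, but you only check that the new corner is convex. Lemma~\ref{lem:localized-mean-convex-rounding} also requires the cap face $C^i$ itself to be mean-convex. A push-off of a minimal disk has mean curvature of the order of the push distance, with no sign in general.

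This can be repaired. Push by $s\psi$ with $\psi>0$ on $\overline{\Sigma^i}$ and $-(\Delta_{\Sigma^i}+|h_{\Sigma^i}|^2)\psi>0$. For example, take a first Dirichlet eigenfunction on a slight extension of $\Sigma^i$. Its eigenvalue is positive: the Dirichlet eigenvalue strictly exceeds the capillary one, and the capillary one is nonnegative by strong stability.

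The paper shows, however, that this is unnecessary. It records $H_{\widetilde S_t}\geq0$ only on $S_t^+$. Every outward capillary surface produced by the construction of \cite{EK} encloses $\Sigma_t$, so its contact curve lies in $S_t^+$. The mean-curvature term in \cite[Corollary~33]{EK} is therefore evaluated only there, and the geometry of the caps never enters.

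\textbf{The Step~2 obstacle.} The ``main obstacle'' you identify in Step~2 does not exist. Strict outer $J_t$-minimality (condition~(30) of \cite{EK}) concerns only competitors $\Gamma$ enclosing $\Sigma_t$. Hence $\widetilde E_t\subset\widetilde E_\Gamma$ by definition, and $\widetilde E_\Gamma\setminus\widetilde E_t$ lies in the region where $(\widetilde M_t,\widetilde S_t)=(M,S)$. The cap contributions cancel, and
\[
J_t(\Gamma)-J_t(\Sigma_t)=\mathfrak J_t(F)-\mathfrak J_t(E_t)
\]
holds exactly. You need neither a replacement of competitors nor a local calibrating foliation near $\Sigma_t$.
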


\begin{proof}

\emph{Step 1.} We first complete the compact side of $\Sigma_t$
without changing the distinguished exterior, the capillary data, or the mass.

Write $\Sigma_t=D_t^1\cup\cdots\cup D_t^k$. By
Proposition~\ref{prop:capillary-separator}, each $D_t^i$ is a disk,
$\partial S_t^+=\bigcup_{i=1}^k\partial D_t^i$, and $\mathcal E_+$ is the
only end of $S_t^+$. For each $i$, extend $S_t^+$ across $\partial D_t^i$
by a short collar on the side away from $\mathcal E_+$. Cap the new boundary
curve by pushing $D_t^i$ slightly away from the preserved exterior. The
collars and pushed-off disks can be chosen pairwise disjoint. Smoothing their
junctions away from $\partial\Sigma_t$ produces a smooth embedded support
$\widetilde S_t$. All modifications lie on the side of $\Sigma_t$ away from
$\mathcal E_+$. Thus the new support agrees with $S$ near $\Sigma_t$ and
throughout the exterior toward $\mathcal E_+$.

The surface $S_t^+$ is planar, has the single end $\mathcal E_+$, and has
finitely many boundary components. Capping them produces a properly embedded
topological plane $\widetilde S_t$. Outside a compact set, this plane is a
graph. It therefore separates $\mathbb R^3$ into two components whose
closures are topological half-spaces. Let $\widetilde M_t$ be the component
containing the preserved exterior.

The construction leaves $\Sigma_t$ unchanged, and $\widetilde S_t$ agrees
with $S$ near $\partial\Sigma_t$. Hence $\Sigma_t$ remains a strongly stable
minimal capillary surface with parameter $t$ in
$(\widetilde M_t,\widetilde S_t)$. The component of
$\widetilde S_t\setminus\partial\Sigma_t$ containing $\mathcal E_+$ is the
unchanged surface $S_t^+$. It agrees with $S_+$ outside a compact set, while
the rest of $\widetilde S_t$ is smooth and compact. Thus
\eqref{eq:natural-hypotheses} gives $H_{\widetilde S_t}\geq0$ on this
component and $H_{\widetilde S_t}\in L^1(\widetilde S_t)$. The same
agreement at infinity gives
\[
m(\widetilde S_t)=m_+(S).
\]

\medskip
\emph{Step 2.} We next transfer strict outer $\mathfrak J_t$-minimality to strict outer $J_t$-minimality in $(\widetilde M_t,\widetilde S_t)$.

It suffices to compare the completed free energy with $\mathfrak J_t$. Let
$J_t$ denote the functional in \eqref{eq:EK-free-energy} for the completed
support $\widetilde S_t$. The region of $\widetilde M_t$ separated from
$\mathcal E_+$ by $\Sigma_t$ is denoted by $\widetilde E_t$. It is the
compact-side replacement of the original inner end region $E_t$. Now let
$\Gamma\subset\overline{\widetilde M_t}$ be an admissible surface enclosing
$\Sigma_t$, and let $\widetilde E_\Gamma$ be the region it separates from
$\mathcal E_+$. Then
\[
\widetilde E_t\subset\widetilde E_\Gamma,
\qquad
(\widetilde M_t,\widetilde S_t)=(M,S)
\quad\text{on }
\overline{\widetilde E_\Gamma\setminus\widetilde E_t}.
\]
Consequently, $ E_t\cup\bigl(\widetilde E_\Gamma\setminus\widetilde E_t\bigr)$
is an inner end region in $M$ enclosing $E_t$. Direct computation gives
\begin{align*}
	J_t(\Gamma)-J_t(\Sigma_t)
	&=|\Gamma|-|\Sigma_t|
	-\tanh t\int_{\widetilde S_t}
	\left(
	\operatorname{Tr}\chi_{\widetilde E_\Gamma}
	-\operatorname{Tr}\chi_{\widetilde E_t}
	\right)dA_{\widetilde S_t}\\
	&=P\bigl(E_t\cup(\widetilde E_\Gamma\setminus\widetilde E_t);\mathring M\bigr)
	-P(E_t;\mathring M)\\
	&\quad-\tanh t\int_S
	\left(
	\operatorname{Tr}_S\chi_{E_t\cup(\widetilde E_\Gamma\setminus\widetilde E_t)}
	-\operatorname{Tr}_S\chi_{E_t}
	\right)dA_S\\
	&=\mathfrak J_t\bigl(E_t\cup(\widetilde E_\Gamma\setminus\widetilde E_t)\bigr)
	-\mathfrak J_t(E_t)\\
	&\geq0.
\end{align*}
The last inequality and its equality case follow from
Proposition~\ref{prop:capillary-separator}. Equality holds only if
$|\widetilde E_\Gamma\setminus\widetilde E_t|=0$. Thus $\Sigma_t$ is
strictly outer $J_t$-minimizing in the completion. This is exactly
condition~(30) of \cite{EK}.

\medskip
\emph{Step 3.} Apply the outward theory of \cite{EK}.
Every competitor $\Sigma$ in \cite[Proposition~17]{EK} satisfies
\[
\widetilde E_t\subset\Omega(\Sigma),
\qquad
\Sigma\subset\overline{\widetilde M_t\setminus\widetilde E_t},
\qquad
(\widetilde M_t,\widetilde S_t)=(M,S)
\quad\text{on }
\overline{\widetilde M_t\setminus\widetilde E_t}.
\]
Within $\overline{\widetilde M_t\setminus\widetilde E_t}$, Lemma~50 and
Proposition~13 of \cite{EK} provide the auxiliary constructions near
$\Sigma_t$ and at infinity, respectively. Every outward capillary surface
$\Sigma$ produced by the construction satisfies
\[
\partial\Sigma\subset S_t^+,
\qquad
H_{\widetilde S_t}\geq0\quad\text{on }S_t^+.
\]
Thus the mean-curvature term in \cite[Corollary~33]{EK} is evaluated only
where it is nonnegative. Combining that corollary with the asymptotic
identification in \cite[Proposition~37]{EK} gives
\[
m_+(S)=m(\widetilde S_t)
\geq\frac1{\cosh t}\sqrt{\frac{|\Sigma_t|}{\pi}},
\]
which proves \eqref{eq:capillary-mass}.
\end{proof}

\begin{proof}[Proof of Theorem~\ref{thm:main}]
If $A_S=0$, the positive mass theorem with arbitrary ends and noncompact
boundary \cite{PMArbitraryEnds} gives $m_+(S)\geq0$, with equality if and only
if $(M,g)$ is isometric to the Euclidean half-space. Since $g$ is the
Euclidean metric, the isometry is the restriction of a rigid motion. This is
equivalent to $q=0$ and the half-space description in the statement.
Assume $A_S>0$.  For every $t\in(0,t_0)$,
Proposition~\ref{prop:capillary-separator} and
Theorem~\ref{thm:capillary-mass} give
\[
 m_+(S)\geq\frac1{\cosh t}
 \sqrt{\frac{|\Sigma_t|}{\pi}}
 \geq\frac1{\cosh t}\sqrt{\frac{A_S}{\pi}}.
\]
Letting $t\downarrow0$ proves \eqref{eq:main-inequality}.
The equality statement is proved in
Theorem~\ref{thm:conditional-rigidity}.
\end{proof}

\section{Rigidity and sharpness}
\label{sec:conditional-rigidity}

Throughout this section, assume $A_S>0$.

We first order the canonical capillary separators and then analyze equality
at a positive capillary parameter.

\begin{lemma}
\label{lem:ordered-capillary-separators}
If $0<s<t<t_0$, then
\begin{equation}
 G_t\subset G_s,
 \qquad E_s\subset E_t
 \label{eq:ordered-capillary-regions}
\end{equation}
almost everywhere.  Moreover,
\begin{equation}
 \frac1{\cosh s}\sqrt{\frac{|\Sigma_s|}{\pi}}
 \leq
 \frac1{\cosh t}\sqrt{\frac{|\Sigma_t|}{\pi}}.
 \label{eq:ordered-capillary-masses}
\end{equation}
\end{lemma}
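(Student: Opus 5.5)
For the ordering \eqref{eq:ordered-capillary-regions}, I will use the lattice inequality \eqref{eq:F-lattice} together with the monotone dependence of $\mathscr F_t$ on $t$. The admissible class $\mathscr G(\Gamma_0,\Gamma_1)$ does not depend on $t$, and it is closed under unions and intersections. For $G,H\in\mathscr G(\Gamma_0,\Gamma_1)$ the trace is additive:
\[
 \operatorname{Tr}_S\chi_{G\cup H}+\operatorname{Tr}_S\chi_{G\cap H}=\operatorname{Tr}_S\chi_G+\operatorname{Tr}_S\chi_H .
\]
Hence
\[
 \mathscr F_s(G)-\mathscr F_t(G)=(\tanh s-\tanh t)\int_S\operatorname{Tr}_S\chi_G\,dA_S .
\]
Take $0<s<t<t_0$. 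Apply the submodularity of perimeter to the pair $G_t,G_s$, and split the trace term accordingly. Adding the minimality of $G_t$ for $\mathscr F_t$ (tested against $G_t\cap G_s$) and of $G_s$ for $\mathscr F_s$ (tested against $G_t\cup G_s$) gives
\[
 (\tanh t-\tanh s)\int_S\bigl(\operatorname{Tr}_S\chi_{G_t}-\operatorname{Tr}_S\chi_{G_t\cap G_s}\bigr)dA_S\le 0 .
\]
Since the integrand is nonnegative, it vanishes. Consequently both comparisons are equalities, so $G_t\cap G_s$ is an $\mathscr F_t$-minimizer. By the smallest-minimizer property in Proposition~\ref{prop:canonical-F-minimizer}, $G_t\subset G_t\cap G_s$, that is, $G_t\subset G_s$ almost everywhere. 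Taking complements in $\Omega(\Gamma_1)$ gives $E_s\subset E_t$.

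For \eqref{eq:ordered-capillary-masses}, I will return to the completion used in Theorem~\ref{thm:capillary-mass} and work in $(\widetilde M_s,\widetilde S_s)$. Since $E_s\subset E_t$, the surface $\Sigma_t$ lies in the closed exterior of $\Sigma_s$, where the completed support agrees with $S$. I will then use the monotonicity of the free-energy mass from \cite[Corollary~33]{EK}, in the form used there to pass between capillary parameters. The EK construction produces an outward family of strictly outer $J_\tau$-minimizing capillary surfaces with increasing parameter $\tau$, starting at $\Sigma_s$, and the quantity $\frac1{\cosh\tau}\sqrt{|\Sigma_\tau|/\pi}$ is nondecreasing along it.

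The key step is to show that the surface this flow reaches at parameter $t$ has area at most $|\Sigma_t|$; this is what identifies the flow's endpoint with our canonical $\Sigma_t$. I would prove it using the strict outer $\mathfrak J_t$-minimality of $E_t$ from Proposition~\ref{prop:capillary-separator}, transported to the completion exactly as in Step~2 of Theorem~\ref{thm:capillary-mass}. Because $E_t$ encloses $E_s$, $\Sigma_t$ is an admissible outward competitor. The outward-minimizing characterization in \cite[Proposition~17]{EK} then says that the outward $J_t$-minimizer enclosing $\Sigma_s$ has energy at most $J_t(\Sigma_t)$. Strict outer minimality of $E_t$ then forces equality of the enclosed regions, since $E_t$ is the smallest such region by the smallest-minimizer property.

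I expect this identification of the flow's endpoint with $\Sigma_t$ to be the main obstacle. The EK monotonicity is stated along their constructed family, not between arbitrary minimizers, so I need uniqueness of the outermost-side minimizer. My first attempt would be a lattice argument for the $J_t$ functional restricted to regions containing $\widetilde E_s$. If that fails, I would fall back on comparing $J_t$-energies directly: the energy comparison gives $|\Sigma_t|\ge$ (area of the EK surface at parameter $t$) up to trace terms, which should suffice for the stated inequality.
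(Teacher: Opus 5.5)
Your proof of the ordering \eqref{eq:ordered-capillary-regions} is correct and is essentially the paper's argument: submodularity of perimeter, modularity of the trace, $\tanh s<\tanh t$, and the smallest-minimizer property of $G_t$.

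The mass comparison \eqref{eq:ordered-capillary-masses} has a genuine gap. Your main line tries to identify the EK surface at parameter $t$ with $\Sigma_t$, and the justification runs the inclusions the wrong way. Because $G_t$ is the \emph{smallest} $\mathscr F_t$-minimizer, $E_t$ is the \emph{largest} $\mathfrak J_t$-minimizing inner end region. Indeed, every minimizer lies in $\Omega(\Gamma_0)$ by Proposition~\ref{prop:localized-outer-barrier}(ii), and hence inside $E_t$ after passing to complements. Strict outer minimality of $E_t$ only compares $E_t$ with regions $F\supset E_t$. A $J_t$-minimizer enclosing $\Sigma_s$ need not enclose $\Sigma_t$, so nothing forces its region to equal $\widetilde E_t$. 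In fact $J_t$-minimizers enclosing $\Sigma_s$ need not be unique. The one to which \cite[Corollary~33]{EK} applies is the minimizer $\Sigma_t^-$ with \emph{least} lateral support area, and it can enclose a strictly smaller region than $\Sigma_t$. So the identification is unavailable, and it is also unnecessary. Your fallback ``up to trace terms'' omits exactly the term that must be controlled. From $J_t(\Sigma')\le J_t(\Sigma_t)$ alone you only get $|\Sigma'|\le|\Sigma_t|+\tanh t\,(|S(\Sigma')|-|S(\Sigma_t)|)$.

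Two ingredients are missing. First, $\Sigma_t$ is a $J_t$-minimizer among \emph{all} admissible $\Gamma$ enclosing $\Sigma_s$ in $(\widetilde M_s,\widetilde S_s)$, not only among those enclosing $\Sigma_t$. With $F=E_s\cup(\widetilde E_\Gamma\setminus\widetilde E_s)$,
\[
J_t(\Gamma)-J_t(\Sigma_t)=\mathfrak J_t(F)-\mathfrak J_t(E_t)\ge\mathfrak J_t(F\cap\Omega(\Gamma_0))-\mathfrak J_t(E_t)=\mathscr F_t\bigl(\Omega(\Gamma_1)\setminus(F\cap\Omega(\Gamma_0))\bigr)-\mathscr F_t(G_t)\ge0.
\]
This uses the truncation and the \emph{global} $\mathscr F_t$-minimality of $G_t$, not strict outer minimality. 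Second, $\Sigma_t^-$ and $\Sigma_t$ then have the same $J_t$-value, and $|S(\Sigma_t^-)|\le|S(\Sigma_t)|$. Hence $|\Sigma_t^-|=J_t(\Sigma_t^-)+\tanh t\,|S(\Sigma_t^-)|\le|\Sigma_t|$, and Corollary~33 applied to $\Sigma_s$ and $\Sigma_t^-$ finishes the proof.

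Alternatively, your lattice idea works if you aim it at this area comparison rather than at uniqueness. Let $\widetilde E_t^-$ be the region enclosed by $\Sigma_t^-$. Outer minimality of $\widetilde E_t$ and submodularity make $\widetilde E_t^-\cap\widetilde E_t$ again a $J_t$-minimizer. The least-lateral-area property then gives $|S(\Sigma_t^-)|\le|S(\Sigma_t)|$, and combined with $J_t(\Sigma_t^-)\le J_t(\Sigma_t)$ this yields $|\Sigma_t^-|\le|\Sigma_t|$.
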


\begin{proof}
The BV perimeter inequality for unions and intersections, together with
$\tanh s<\tanh t$, gives
\[
\mathscr F_s(G_s\cup G_t)+\mathscr F_t(G_s\cap G_t)
\leq\mathscr F_s(G_s)+\mathscr F_t(G_t).
\]
The reverse inequality follows from $\mathscr F_s$- and
$\mathscr F_t$-minimality. Hence $G_s\cup G_t$ and $G_s\cap G_t$ are
$\mathscr F_s$- and $\mathscr F_t$-minimizers, respectively. Since $G_t$
is the smallest $\mathscr F_t$-minimizer,
$G_t=G_s\cap G_t$ almost everywhere. This proves
\eqref{eq:ordered-capillary-regions}.

Complete the compact side of $\Sigma_s$ as in the proof of
Theorem~\ref{thm:capillary-mass}. Denote the resulting pair by
$(\widetilde M_s,\widetilde S_s)$. By
\eqref{eq:ordered-capillary-regions}, $\Sigma_t\subset
\overline{\widetilde M_s}$ is admissible and encloses $\Sigma_s$. Let
$\Gamma\subset\overline{\widetilde M_s}$ be any admissible surface enclosing
$\Sigma_s$, and let $\widetilde E_s$, $\widetilde E_t$, and
$\widetilde E_\Gamma$ be the regions separated from $\mathcal E_+$ by
$\Sigma_s$, $\Sigma_t$, and $\Gamma$, respectively. As in Step~2 of the
proof of Theorem~\ref{thm:capillary-mass},
\begin{align*}
 F&=E_s\cup(\widetilde E_\Gamma\setminus\widetilde E_s)
 \quad\text{is an inner end region in }M,\\
 E_t&=E_s\cup(\widetilde E_t\setminus\widetilde E_s),
\end{align*}
\begin{align*}
 J_t(\Gamma)-J_t(\Sigma_t)
 &=|\Gamma|-|\Sigma_t|
 -\tanh t\int_{\widetilde S_s}
 \left(\operatorname{Tr}\chi_{\widetilde E_\Gamma}
 -\operatorname{Tr}\chi_{\widetilde E_t}\right)dA_{\widetilde S_s}\\
 &=\mathfrak J_t(F)-\mathfrak J_t(E_t)\\
 &\geq
 \mathfrak J_t\bigl(F\cap\Omega(\Gamma_0)\bigr)-\mathfrak J_t(E_t)\\
 &=\mathscr F_t\!\left(
 \Omega(\Gamma_1)\setminus\bigl(F\cap\Omega(\Gamma_0)\bigr)
 \right)-\mathscr F_t(G_t)\geq0.
\end{align*}
Thus $\Sigma_t$ is a $J_t$-minimizer in
$(\widetilde M_s,\widetilde S_s)$ among admissible surfaces enclosing
$\Sigma_s$.

Let $\Sigma_t^-\subset\overline{\widetilde M_s}$ be a $J_t$-minimizer
enclosing $\Sigma_s$ with least lateral support area.  Since $\Sigma_t$ and
$\Sigma_t^-$ have the same $J_t$-value,
$|\Sigma_t^-|\leq|\Sigma_t|$.  Corollary~33 of \cite{EK} therefore gives
\begin{align*}
 \frac1{\cosh s}\sqrt{\frac{|\Sigma_s|}{\pi}}
 &\leq\frac1{\cosh t}\sqrt{\frac{|\Sigma_t^-|}{\pi}}
 \leq\frac1{\cosh t}\sqrt{\frac{|\Sigma_t|}{\pi}},
\end{align*}
which proves \eqref{eq:ordered-capillary-masses}.
\end{proof}

\begin{lemma}
\label{lem:positive-parameter-rigidity}
Fix $t\in(0,t_0)$ and form the one-ended completion
$(\widetilde M_t,\widetilde S_t)$ used in
Theorem~\ref{thm:capillary-mass}.  If
\begin{equation}
 m(\widetilde S_t)=\frac1{\cosh t}
 \sqrt{\frac{|\Sigma_t|}{\pi}},
 \label{eq:positive-parameter-equality}
\end{equation}
then $\Sigma_t$ is a round flat disk and $S_t^+$ is a portion of a catenoid
of waist radius $m(\widetilde S_t)$.
\end{lemma}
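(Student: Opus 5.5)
The plan is to reduce the statement to the equality case of the Eichmair--Koerber monotonicity on the one-ended completion $(\widetilde M_t,\widetilde S_t)$. By Theorem~\ref{thm:capillary-mass} and its proof, $\Sigma_t$ is a strictly outer $J_t$-minimizing, strongly stable minimal capillary surface in $(\widetilde M_t,\widetilde S_t)$. The support satisfies $H_{\widetilde S_t}\ge0$ on $S_t^+$ and $H_{\widetilde S_t}\in L^1$. The outward flow of \cite{EK}, through Proposition~17, Corollary~33 and Proposition~37, then produces a one-parameter family of $J_s$-minimizing capillary surfaces $\Sigma_s$ for $s\ge t$. These surfaces are all contained in the preserved exterior, and their boundaries lie on $S_t^+$. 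Along this family the free-energy mass
\[
 s\mapsto\frac1{\cosh s}\sqrt{\frac{|\Sigma_s|}{\pi}}
\]
is nondecreasing and converges to $m(\widetilde S_t)$ as $s\to\infty$. Hypothesis \eqref{eq:positive-parameter-equality} therefore says that this monotone quantity is constant on $[t,\infty)$.

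First, I will look at the derivative formula behind \cite[Corollary~33]{EK}. The derivative is a sum of nonnegative terms. These are the second-fundamental-form terms $|\mathring h_{\Sigma_s}|^2$, the gradient-of-speed terms, and the term involving $H_{\widetilde S_t}$ along $\partial\Sigma_s$. In the weak, jump-allowing version of the flow there is also the defect coming from jumps. Constancy forces every term to vanish for almost every $s\ge t$. From this I will extract three facts. Each $\Sigma_s$ is connected, since a disconnected family loses area efficiency under the isoperimetric comparison. Each $\Sigma_s$ is totally umbilic and minimal, hence a flat disk. The normal speed is constant, and $H_{\widetilde S_t}=0$ along the swept contact curves. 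Because of the jump defect, there are no jumps, so the family is a smooth foliation of $\overline{\widetilde M_t\setminus\widetilde E_t}$ starting at $\Sigma_t$ itself. In particular $\Sigma_t$ is a flat disk, and the equality case of the Euclidean-type estimate used in \cite{EK} makes it round.

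Second, I will identify the swept support. A flat round disk meeting the support at constant angle $\theta(s)$ with $\cos\theta=-\tanh s$, together with constant-speed evolution, determines the contact curves. They are coaxial horizontal circles, parallel translates along the common normal axis, whose radii satisfy the ODE coming from the angle condition. Integrating that ODE gives $r=m\cosh(x_3/m)$ for some $m>0$. This shows that $S_t^+$, which is the union of the contact curves for $s\ge t$ by the foliation property, is a portion of $\mathrm{Cat}_m$ lying above the waist. Finally, comparing the asymptotics with \eqref{eq:mass} for the catenoid graph $\psi(y)=m\operatorname{arcosh}(|y|/m)$ gives $m=m(\widetilde S_t)$. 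The same value also follows directly from \eqref{eq:positive-parameter-equality}, since a catenoid slice at height $x_3$ has $|\Sigma_s|=\pi m^2\cosh^2(x_3/m)$ and $\cosh s=\cosh(x_3/m)$.

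The main obstacle is the first step: justifying that equality in the weak monotonicity forces a smooth, jump-free foliation starting exactly at $\Sigma_t$, rather than only for large $s$. My first attempt will invoke the rigidity part of \cite[Theorem~1]{EK} and its proof (the half-catenoid characterization), adapted to the capillary starting surface. Strict outer $J_t$-minimality of $\Sigma_t$ rules out an initial jump, as in the equality analysis following Corollary~33 there.
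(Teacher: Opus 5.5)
Your overall strategy matches the paper's. You use equality in the Eichmair--Koerber monotonicity on the completion, strict outer $J_t$-minimality to identify the initial member with $\Sigma_t$, and the equality cases of \cite[Corollary~33 and Proposition~31]{EK} to obtain round flat disks with $H_{\widetilde S_t}=0$ along their contact curves and no jumps. Deferring the no-jump foliation to Eichmair--Koerber's equality analysis is also what the paper does.

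\textbf{The gap is connectedness.} Your reason, that a disconnected family loses area efficiency under the isoperimetric comparison, does not work. The differential inequality behind the monotonicity is linear in area and holds for each disk component separately. Gauss--Bonnet with $\chi=1$ is applied to each component, as in Lemma~\ref{lem:stability-euler-characteristic} and the remark after it. So each component satisfies $\frac{d}{ds}|\Sigma^i_s|\geq2\tanh s\,|\Sigma^i_s|$ on its own, and these inequalities simply add. This additivity is exactly why the theorem holds for the total area of disconnected separators.

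Consequently, every local term in the equality analysis also vanishes for the following configuration: $k\geq2$ round flat disks, each moving with constant speed and sweeping its own catenoidal collar of waist $m_i$, with $\sum_i m_i^2=m^2$. Its free-energy mass $\frac1{\cosh s}\sqrt{\sum_i\pi m_i^2\cosh^2 s/\pi}$ is constant. Such configurations can only be excluded globally. Even if you knew that $\Sigma_s$ is connected for large $s$, carrying this back to $s=t$ requires that the number of components be constant along the family.

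The paper supplies that global argument in four steps:
\begin{enumerate}
\item Two equality-case surfaces $\widehat\Sigma_{r_1}$ and $\widehat\Sigma_{r_2}$ cannot meet. At a transverse crossing of their contact circles, both tangent directions would be principal directions of $\widetilde S_t$ with positive curvature. This is impossible where $H_{\widetilde S_t}=0$. (The tangent of each contact circle is principal because the disk normal and the contact angle are constant along it.)
\item \cite[Lemma~21]{EK}, together with equality of the lateral areas, gives smooth dependence on the parameter and a constant number of components.
\item \cite[Lemmas~26 and~35]{EK} give exhaustion of the distinguished exterior.
\item The components then sweep disjoint, relatively open pieces covering the connected surface $S_t^+$, which forces a single component.
\end{enumerate}
Without this, neither the conclusion that $\Sigma_t$ is a disk nor the single coaxial family in your second step is justified.

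\textbf{Your second step.} Once a connected smooth sweep by round flat disks is available, your ODE route is a valid alternative to the paper's. Constant normal speed on a flat disk forces the planes to be parallel, and constancy of the angle along each circle forces a common axis. The angle condition alone only gives $dr/dz=\sinh s$, which is not closed. You must feed in $r(s)=m\cosh s$ from the constancy of $\frac1{\cosh s}\sqrt{|\Sigma_s|/\pi}\equiv m$. This yields $dz/ds=m$ and hence $r=m\cosh((z-z_0)/m)$ with $m=m(\widetilde S_t)$.

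The paper avoids parametrizing the family. It uses only $H_{\widetilde S_t}=0$ on the swept surface $S_t^+$ and the Cauchy data $\nu_{\widetilde S_t}=-\tanh t\,n_t+(\cosh t)^{-1}\eta_t$ on the single circle $\partial\Sigma_t$, followed by unique continuation for minimal surfaces. That route needs less information about the family (no constant speed, no control of the centers). Yours is more explicit but relies on more structure of the sweep.
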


\begin{proof}
Let $\Sigma_r^-$ be a $J_r$-minimizer with least lateral support
area in the completed pair. Strict outer $J_t$-minimality of $\Sigma_t$,
Corollary~33 of \cite{EK}, and Proposition~37 of \cite{EK} give
\[
 m(\widetilde S_t)
 =\frac1{\cosh t}\sqrt{\frac{|\Sigma_t|}{\pi}}
 \leq\frac1{\cosh r}\sqrt{\frac{|\Sigma_r^-|}{\pi}}
 \leq\lim_{\rho\to\infty}
 \frac1{\cosh\rho}\sqrt{\frac{|\Sigma_\rho^-|}{\pi}}
 =m(\widetilde S_t)
\]
for every $r>t$. The equality cases in
\cite[Corollary~33 and Proposition~31]{EK} show that all $J_r$-minimizers
have the same lateral support area. For every $r\geq t$, they also allow us
to choose a $J_r$-minimizing union of round flat disks
$\widehat\Sigma_r$ such that $H_{\widetilde S_t}=0$ on
$\partial\widehat\Sigma_r$. Strict outer $J_t$-minimality gives
$\widehat\Sigma_t=\Sigma_t$.

We next prove that each $\widehat\Sigma_r$ is connected. If
$t\leq r_1<r_2$ and $\widehat\Sigma_{r_1}$ met
$\widehat\Sigma_{r_2}$, their boundary circles would meet transversely.
At an intersection point, the tangent directions of the two circles are
distinct principal directions of $\widetilde S_t$ with positive principal
curvatures, exactly as in the equality proof of \cite[Theorem~3]{EK}. This
contradicts $H_{\widetilde S_t}=0$ on the first contact circle. Hence the
surfaces $\widehat\Sigma_r$ are pairwise disjoint. Lemma~21 of \cite{EK},
together with equality of the lateral support areas, gives smooth dependence
on $r$ and constancy of the number of components. Lemmas~26 and~35 show that
the corresponding nested regions exhaust the connected distinguished
exterior. The equality proof of \cite[Theorem~3]{EK} therefore gives one
component at every parameter and
\begin{equation}
	S_t^+=\bigcup_{r\in[t,\infty)}\partial\widehat\Sigma_r.
	\label{eq:positive-level-sweep}
\end{equation}

Since $\widehat\Sigma_t=\Sigma_t$ and $\widehat\Sigma_t$ is connected,
$\Sigma_t$ is a round flat disk. Since
$H_{\widetilde S_t}=0$ on $\partial\widehat\Sigma_r$ for every $r\geq t$,
\begin{equation}
	H_{\widetilde S_t}=0\quad\text{on }S_t^+.
	\label{eq:positive-level-minimal-support}
\end{equation}
Let $n_t$ be the unit normal to $\Sigma_t$, and let $\eta_t$ be the radial
unit vector along $\partial\Sigma_t$ pointing toward the distinguished
exterior.  The capillary condition gives
\begin{equation}
	\nu_{\widetilde S_t}
	=-\tanh t\,n_t+\frac1{\cosh t}\eta_t
	\quad\text{on }\partial\Sigma_t.
	\label{eq:catenoid-cauchy-data}
\end{equation}
Choose the catenoid whose axis passes through the center of
$\partial\Sigma_t$ in the direction $n_t$ and whose waist radius is
$\cosh(t)^{-1}\sqrt{|\Sigma_t|/\pi}$. Choose its cross-section at axial
parameter $t$ to be $\partial\Sigma_t$. Its normal there is given by
\eqref{eq:catenoid-cauchy-data}, so it has the same Cauchy data as $S_t^+$.
The surface $S_t^+$ is minimal by
\eqref{eq:positive-level-minimal-support}. Boundary and interior unique
continuation for their local minimal graph representations therefore show
that $S_t^+$ is a portion of this catenoid. Finally,
\eqref{eq:positive-parameter-equality} gives
\[
\frac1{\cosh t}\sqrt{\frac{|\Sigma_t|}{\pi}}=m(\widetilde S_t),
\]
which proves the assertion.
\end{proof}

\begin{proposition}
	\label{prop:existing-outermost}
	Suppose that $D\in\mathcal A_S$ is outermost toward $\mathcal E_+$. Then
	\begin{equation}
		m_+(S)\geq\sqrt{\frac{|D|}{\pi}}
		\geq\sqrt{\frac{A_S}{\pi}}.
		\label{eq:outermost-comparison}
	\end{equation}
\end{proposition}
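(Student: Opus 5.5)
The second inequality is immediate: $D\in\mathcal A_S$, so $|D|\geq A_S$ by \eqref{eq:AS}. The content is therefore the first inequality. The plan is to reduce it to the one-ended Eichmair--Koerber extrinsic Penrose inequality \cite[Theorem~1]{EK} by completing the compact side of $D$, in the same way as Step~1 of the proof of Theorem~\ref{thm:capillary-mass}.

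\emph{Step 1: topology of the exterior.} Let $M_+(D)$ be the $\mathcal E_+$-component of $M\setminus D$. Compactify the ends as in \eqref{eq:topology} and cut the resulting ball along the components of $D$. Each component of $D$ lies on $\partial M_+(D)$ by Definition~\ref{def:outermost}, and $D$ separates $\mathcal E_+$ from every $\mathcal E_j$. So the support piece $S_D^+=S\cap\overline{M_+(D)}$ is planar, has $\mathcal E_+$ as its only end, and its boundary curves are exactly $\partial D$. We also need each component of $D$ to be a disk. If the proposition is read as allowing general free-boundary minimal surfaces, I would deduce this from outermostness: a stable free-boundary component with $H_S\geq0$ is a disk by \cite[Lemma~49]{EK}. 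Outermost surfaces are one-sided area minimizing from the exterior, obtained by minimizing area in $\overline{M_+(D)}$ against $D$ as a barrier, so they are stable. Granting this, each complementary piece is a ball.

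\emph{Step 2: completion.} Cap each boundary circle of $S_D^+$ by a collar continued past $\partial D_i$, followed by a slightly pushed-off copy of $D_i$. All modifications lie on the side away from $\mathcal E_+$. This gives a properly embedded plane $\widetilde S$ that bounds a region $\widetilde M\supset M_+(D)$ and agrees with $S$ on $\overline{M_+(D)}$. The mass is unchanged, $m(\widetilde S)=m_+(S)$, and $H_{\widetilde S}\in L^1$. Moreover $D$ remains a free-boundary minimal surface in $(\widetilde M,\widetilde S)$.

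\emph{Step 3: apply the one-ended theorem.} The Eichmair--Koerber theorem needs $H_{\widetilde S}\geq0$ everywhere and $D$ outermost in $(\widetilde M,\widetilde S)$. Both can fail on the new caps. We bypass this the same way Theorem~\ref{thm:capillary-mass} does: the free-energy/IMCF-type argument of \cite{EK} starting from an outermost horizon only uses the support outside the horizon, i.e.\ $S_D^+$, where $H_S\geq0$. Outermostness in $\overline{M_+(D)}$ is exactly the hypothesis of \cite{EK} restricted to the exterior. Concretely, I would run the $J_t$ construction of \cite[Proposition~17]{EK} from the horizon $D$ inside $\overline{M_+(D)}$ as $t\downarrow0$. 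By Lemma~\ref{lem:finite-slab-calibration}-type barriers and outermostness, the outward $J_t$-minimizers enclosing $D$ have area tending to $|D|$. Then \cite[Corollary~33, Proposition~37]{EK}, evaluated only on $S_D^+$, gives
\[
m_+(S)\geq\lim_{t\downarrow 0}\frac1{\cosh t}\sqrt{\frac{|\Sigma_t^D|}{\pi}}=\sqrt{\frac{|D|}{\pi}}.
\]

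\emph{Main obstacle.} The delicate point is Step~3: showing that the $t\downarrow0$ limit of the outer capillary minimizers recovers $|D|$. This is where outermostness enters, since it rules out a different free-boundary minimal surface appearing in the limit in $\overline{M_+(D)}$. We must also check that no competitor leaks into the completed caps. For the latter I would use $D$ itself as a one-sided barrier via the maximum principle, since $D$ is minimal with free boundary.
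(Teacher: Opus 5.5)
Your Steps~1 and~2 match the paper's proof. Both get strong stability of each component of $D$ from outermostness, disk topology from \cite[Lemma~49]{EK} (Lemma~\ref{lem:stability-euler-characteristic} with $t=0$), and then cap $S_+(D)$ by collars and pushed-off copies of the components of $D$.

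\textbf{The gap is in Step~3.} You decline to apply the Eichmair--Koerber Penrose inequality to the completion, and instead propose rerunning the $J_t$ construction from $D$ as $t\downarrow0$. The decisive part of that rerun is left unproved, as you acknowledge. For it to be an argument, the outer $J_t$-minimizers enclosing $D$ would have to:
\begin{itemize}
\item exist;
\item not touch $D$;
\item be smooth, strongly stable capillary surfaces satisfying the strict outer-minimizing condition~(30) of \cite{EK}.
\end{itemize}
Only then can \cite[Corollary~33 and Proposition~37]{EK} be invoked, and none of this is established. The point you single out as the main obstacle is also not the real one. You need only $\liminf_{t\downarrow0}|\Sigma_t^D|\geq|D|$, not convergence of the minimizers to $D$. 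That bound is immediate from the outer area-minimality of $D$ you assert in Step~1, although that assertion itself still needs an obstacle argument. The paper needs only the weaker stability statement, obtained from a mean-convex deformation and a barrier.

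\textbf{The missing observation} is that outermostness is a condition on the exterior alone, so the detour is unnecessary. By Definition~\ref{def:outermost}, it involves only $\overline{M_+(D)}$ and the compact free-boundary minimal surfaces inside it. Every modification in your Step~2 lies on the far side of $D$. Hence the exterior of $D$ in $(\widetilde M,\widetilde S)$ is still $M_+(D)$ with support $S_+(D)$, and $D$ remains an outermost free-boundary minimal surface there. The completion also has $H_{\widetilde S}=H_S\geq0$ on $S_+(D)$, $H_{\widetilde S}\in L^1(\widetilde S)$, and $m(\widetilde S)=m_+(S)$. With these facts, the paper applies \cite[Theorem~3]{EK} directly (not Theorem~1, which you cite) to obtain $m_+(S)\geq\sqrt{|D|/\pi}$.

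Your concern about mean convexity on the caps is answered by the principle you invoke yourself in Step~3: the Eichmair--Koerber argument only sees the support outside the horizon. Once you accept that principle, the theorem can be used as a black box, and no $t\downarrow0$ analysis is needed.
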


\begin{proof}
Outermostness implies that every component of $D$ is strongly stable.
Indeed, a negative first eigenvalue would yield a strictly mean-convex
deformation into $M_+(D)$. Barrier minimization as in
\cite[Lemmas~2.1 and~2.3]{Koerber} would then produce a free-boundary
minimal surface strictly outside $D$, contradicting outermostness.
Lemma~\ref{lem:stability-euler-characteristic}, with $t=0$, therefore shows
that every component of $D$ is a disk.

Consequently, $S_+(D)$ is a plane with finitely many open disks removed.
Cap each boundary component of $S_+(D)$ by a slight push-off of the
corresponding component of $D$, joined through a collar of $\partial D$.
Round the resulting corners away from $\partial D$. This produces a smooth
properly embedded asymptotically flat plane $\widehat S$. Let
$\widehat M$ be the component of $\mathbb R^3\setminus\widehat S$ containing
$M_+(D)$. Then the exterior of $D$ in $\widehat M$ is exactly $M_+(D)$,
with support $S_+(D)$. Hence $D$ remains outermost and free-boundary minimal in
$(\widehat M,\widehat S)$, and
\[
H_{\widehat S}=H_S\geq0\quad\text{on }S_+(D),
\qquad
H_{\widehat S}\in L^1(\widehat S),
\qquad
m(\widehat S)=m_+(S).
\]
Theorem~3 of \cite{EK} therefore gives
\[
 m_+(S)=m(\widehat S)\geq\sqrt{\frac{|D|}{\pi}}.
\]
The inequality $|D|\geq A_S$ follows from $D\in\mathcal A_S$.
\end{proof}

\begin{theorem}
	\label{thm:conditional-rigidity}
	Assume $A_S>0$ and
	\[
	m_+(S)=\sqrt{\frac{A_S}{\pi}}.
	\]
	Then there exists an outermost free-boundary minimal separator
	$D\in\mathcal A_S$ with $|D|=A_S$.  After a rigid motion,
	\[
	D=\{(y,0):|y|\leq m_+(S)\},
	\qquad
	S_+(D)=\mathrm{Cat}_{m_+(S)}^+.
	\]
\end{theorem}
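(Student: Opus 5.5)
The plan is to force equality at every positive capillary parameter, apply Lemma~\ref{lem:positive-parameter-rigidity} there, and then let $t\downarrow0$.

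\emph{Step 1: equality for every $t$.} Set $f(t)=\cosh(t)^{-1}\sqrt{|\Sigma_t|/\pi}$ for $t\in(0,t_0)$. Theorem~\ref{thm:capillary-mass} and Proposition~\ref{prop:capillary-separator} give
\[
\frac1{\cosh t}\sqrt{\frac{A_S}{\pi}}\leq f(t)\leq m_+(S).
\]
By Lemma~\ref{lem:ordered-capillary-separators}, $f$ is nondecreasing. Suppose $m_+(S)=\sqrt{A_S/\pi}$. Then $f(t)\to m_+(S)$ as $t\downarrow0$, and monotonicity forces $f(t)\leq m_+(S)$ to be attained for every $t$: if $f(t)=m_+(S)$ failed at some $t$, nothing yet prevents it, so I instead use the upper bound through the one-ended theory. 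Fix $t$ and apply the chain in the proof of Lemma~\ref{lem:positive-parameter-rigidity}. On $(\widetilde M_t,\widetilde S_t)$, the quantities $\cosh(r)^{-1}\sqrt{|\Sigma_r^-|/\pi}$ increase from $f(t)$ to $m(\widetilde S_t)=m_+(S)$. To get $f(t)=m_+(S)$ exactly, use the area bound as follows. Since $A_S\leq|\Sigma_t|$, we have $f(t)\geq\cosh(t)^{-1}m_+(S)$, so equality is only asymptotic. This is not enough by itself.

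The cleaner route is to note that $|\Sigma_t|\to A_S$. Indeed, $\cosh(t)\,m_+(S)\geq\sqrt{|\Sigma_t|/\pi}\geq\sqrt{A_S/\pi}=m_+(S)$. Hence $\Sigma_t$ is an almost-minimizing separator with uniformly bounded area, by \eqref{eq:capillary-area-bounds}. By Lemma~\ref{lem:ordered-capillary-separators}, the regions $E_t$ increase as $t$ decreases.

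\emph{Step 2: limit $t\downarrow0$.} As $t\downarrow0$, the sets $G_t$ increase and stay inside a fixed bounded region. This requires a uniform bound; I would obtain it from the uniform $\mathscr F_{t}$ bound together with the isoperimetric extinction argument of Proposition~\ref{prop:canonical-F-minimizer}. The constant $C_t$ there degenerates as $t\to0$, so this is the main obstacle.

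What I would try first is to replace the isoperimetric tail bound by the $\mathfrak J_t$ outer minimality and monotonicity, so the limit is controlled from the $\mathcal E_+$ side. The nesting $E_s\subset E_t$ shows that the limit set $E_0=\bigcap_{t}E_t$ is nonempty: it contains $E_{t_0/2}$'s complement structure in reverse. Concretely, the $\Sigma_t$ move toward the inner ends as $t\downarrow 0$.

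By stability (strong capillary stability) and the area bounds, curvature estimates for stable capillary surfaces (\cite{ChodoshEdelenLi,DePhilippisMaggi}) give smooth subconvergence on compact sets to a free-boundary minimal surface $D$ of area at most $A_S$. If $D$ is compact, it is admissible, so $|D|=A_S$.

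If the $\Sigma_t$ escape to an uncontrolled end, the area lost there is still counted. In that case, compare with the limit of the complement, $\Omega(\Gamma_1)\setminus E_0$, which is a $\mathscr F_0$ (pure perimeter) minimizer among separators. Its boundary has area $\leq\liminf|\Sigma_t|=A_S$ and is bounded, by the $t$-independent monotonicity formula for area minimizers with free boundary, giving density lower bounds on the escaping pieces. So $D$ is a compact area-minimizing separator attaining $A_S$.

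\emph{Step 3: outermost and rigidity.} Next I show that $D$ is outermost toward $\mathcal E_+$. Every compact free-boundary minimal surface in $\overline{M_+(D)}$ that is not a component of $D$ would, by the argument of Proposition~\ref{prop:existing-outermost} applied after replacing $D$ by an outermost surface $D'$ enclosing it, give $m_+(S)\geq\sqrt{|D'|/\pi}$. The surface $D'$ is obtained by minimizing area outside $D$ with barrier $D$ and the calibration of Lemma~\ref{lem:finite-slab-calibration}. Then $|D'|\geq A_S$ forces $|D'|=A_S$, and the equality case of \cite[Theorem~3]{EK} on the one-ended completion $\widehat S$ shows that $D'$ is a flat disk and $S_+(D')$ a half-catenoid of waist $m_+(S)$.

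Finally, a half-catenoid exterior contains no other compact free-boundary minimal surface: by the maximum principle against the foliation by the catenoid's waist-parallel capillary disks, $D'$ is the waist. Since $D$ lies inside, area minimality gives $D=D'$. A rigid motion then puts $D=\{(y,0):|y|\leq m_+(S)\}$ and $S_+(D)=\mathrm{Cat}^+_{m_+(S)}$.
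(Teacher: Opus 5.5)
There is a genuine gap, and it starts in Step~1, where you abandon an argument that already works. The three facts you list give exact equality. For $0<s<t<t_0$, monotonicity and the lower bound at $s$ give
\[
f(t)\geq f(s)\geq\frac{1}{\cosh s}\sqrt{\frac{A_S}{\pi}}=\frac{m_+(S)}{\cosh s}.
\]
Letting $s\downarrow0$ yields $f(t)\geq m_+(S)$. Together with $f(t)\leq m_+(S)$, this gives $f\equiv m_+(S)$ on $(0,t_0)$ exactly, not just asymptotically.

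This is the paper's key step. Since $m(\widetilde S_t)=m_+(S)$, \eqref{eq:positive-parameter-equality} holds at every positive parameter. Lemma~\ref{lem:positive-parameter-rigidity} then makes each $\Sigma_t$ a round flat disk and each $S_t^+$ a piece of a catenoid of waist $m_+(S)$. The nesting $S_t^+\subset S_s^+$ from Lemma~\ref{lem:ordered-capillary-separators}, together with unique continuation, puts all these pieces on one catenoid $\mathrm{Cat}_{m_+(S)}$. So $\Sigma_t$ is explicitly the flat disk whose boundary circle has radius $m_+(S)\cosh t$. It converges smoothly to the waist disk $D$ with no compactness theory at all. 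The rest is then elementary. The maximum principle excludes interior contact of $D$ with $S$, so $D$ is free-boundary. An isotopy from $D$ to $\Sigma_t$ gives $D\in\mathcal A_S$ with $|D|=\pi m_+(S)^2=A_S$. For any compact free-boundary minimal $Q\subset\overline{M_+(D)}$, integrating $\Delta_Qx_3=0$ gives $\int_{\partial Q}\tanh(x_3/m_+(S))\,ds=0$, which forces $Q=D$.

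Having dropped this, your Step~2 tries to pass to $t\downarrow0$ by compactness, and it does not go through. You correctly note that the extinction constant $C_t$ degenerates, but the proposed repair fails for three reasons. First, $\mathscr F_0=P(\,\cdot\,;\mathring M)$ loses the term $\tanh t\,P(G;\mathbb R^3)$ in \eqref{eq:intro-coercivity}, which is the only thing preventing escape through the uncontrolled ends $\mathcal E_j$. Second, a free-boundary monotonicity formula needs geometric control of $S$, such as curvature bounds, that is not assumed there. Third, density lower bounds on escaping pieces do not produce a compact limit that still separates $\mathcal E_+$ from the other ends, and separation (hence $|D|\geq A_S$) is not preserved under local smooth convergence without further argument. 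Step~3 also presupposes an outermost free-boundary minimal $D'$ enclosing $D$, whose existence you never establish; Proposition~\ref{prop:existing-outermost} assumes one is given. Moreover, the claim that area minimality forces $D=D'$ is unjustified, since nested separators of equal area are not excluded by minimality alone. All of this is bypassed once you have $f\equiv m_+(S)$ and apply Lemma~\ref{lem:positive-parameter-rigidity} at positive $t$.
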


\begin{proof}
The assumed equality gives $A_S=\pi m_+(S)^2$. By
Lemma~\ref{lem:ordered-capillary-separators}, the function
\[
t\longmapsto\frac1{\cosh t}\sqrt{\frac{|\Sigma_t|}{\pi}}
\]
is nondecreasing. Proposition~\ref{prop:capillary-separator} and
Theorem~\ref{thm:capillary-mass} give
\[
\frac{m_+(S)}{\cosh t}
\leq\frac1{\cosh t}\sqrt{\frac{|\Sigma_t|}{\pi}}
\leq m_+(S).
\]
Letting $t\downarrow0$ yields
\begin{equation}
\frac1{\cosh t}\sqrt{\frac{|\Sigma_t|}{\pi}}=m_+(S)
\quad \forall t\in(0,t_0).
\label{eq:constant-capillary-mass}
\end{equation}

For every $t\in(0,t_0)$, the associated one-ended completion satisfies
$m(\widetilde S_t)=m_+(S)$. Equation~\eqref{eq:constant-capillary-mass} and
Lemma~\ref{lem:positive-parameter-rigidity} therefore show that $\Sigma_t$
is a round flat disk and $S_t^+$ lies on a catenoid of waist radius
$m_+(S)$.
Moreover, \eqref{eq:ordered-capillary-regions} gives
\[
S_t^+\subset S_s^+\quad(0<s<t<t_0).
\]
The catenoidal portions overlap. By unique continuation, they belong to one
catenoid. After a rigid motion, this catenoid is
$\mathrm{Cat}_{m_+(S)}$, and $\partial\Sigma_t$ has radius
$m_+(S)\cosh t$. Moreover,
\[
\Sigma_t\longrightarrow
D:=\{(y,0):|y|\leq m_+(S)\}
\quad\text{smoothly as }t\downarrow0.
\]
Hence $D\subset\overline M$, and properness of $S$ gives
$\partial D\subset S$. Suppose $\operatorname{int}D\cap S\neq\varnothing$.
The one-sided strong maximum principle gives local coincidence of $D$ and
$S$, so $\operatorname{int}D\cap S$ is open in $S$. It is also closed.
Indeed, let $p\in S$ be a limit point. Then $p\in D$. If $p\in\partial D$,
local coincidence at the approaching points and continuity of the tangent
planes give $T_pS=T_pD$, contradicting the limiting orthogonal contact
angle. Thus $p\in\operatorname{int}D\cap S$.
Since $S$ is connected and noncompact, whereas $D$ is compact,
\[
\operatorname{int}D\cap S=\varnothing.
\]
Therefore
\[
D\cap S=\partial D,
\qquad
\langle\nu_D,\nu_S\rangle=0\quad\text{on }\partial D,
\]
so $D$ is free-boundary.

Let $M_t^+$ and $M_+(D)$ be the components containing $\mathcal E_+$ of
$M\setminus\Sigma_t$ and $M\setminus D$, respectively. We claim that
\begin{equation}
	M_+(D)=\bigcup_{0<t<t_0}M_t^+.
	\label{eq:limiting-outer-region}
\end{equation}
Indeed, for $x\in M_+(D)$, choose a compact path in $M_+(D)$ joining $x$
to a fixed point in the common $\mathcal E_+$-tail. The path has positive
distance from $D$, so $\Sigma_t\to D$ implies $x\in M_t^+$ for all
sufficiently small $t$. This proves
$M_+(D)\subset\bigcup_{0<t<t_0}M_t^+$. Conversely, let $x\in M_t^+$.
The disk $D$ lies on the side of $\Sigma_t$ away from $\mathcal E_+$, so
$x\notin D$. The nesting gives $x\in M_s^+$ for every $0<s<t$. Smooth
convergence with the distinguished side fixed implies that these components
converge locally in $M\setminus D$ to $M_+(D)$. Hence $x\in M_+(D)$, proving
the reverse inclusion.

For sufficiently small $t$, the smooth convergence $\Sigma_t\to D$ gives
an ambient isotopy supported in a fixed compact set that carries $D$ to
$\Sigma_t$.  Hence $M_t^+$ and $M_+(D)$ contain the same end tails.
Proposition~\ref{prop:capillary-separator} and \eqref{eq:topology} show that
$\mathcal E_+$ is the only end of $M_t^+$, and therefore of $M_+(D)$.
Thus $D$ separates $\mathcal E_+$ from all the remaining ends, and
\[
D\in\mathcal A_S,
\qquad
|D|=\pi m_+(S)^2=A_S.
\]
Moreover,
\[
S_+(D)=\overline{\bigcup_{0<t<t_0}S_t^+}
=\mathrm{Cat}_{m_+(S)}^+.
\]

Finally, let $Q$ be a connected component of a compact free-boundary
minimal surface in $\overline{M_+(D)}$. Since no closed
minimal surface exists in $\mathbb R^3$, $\partial Q\neq\varnothing$.
Let $x_3$ be the axial coordinate of $\mathrm{Cat}_{m_+(S)}$, normalized to
vanish on its waist. Since $Q$ is minimal, $\Delta_Qx_3=0$. Along
$\partial Q$, the free-boundary condition identifies the outward conormal
$\mu_Q$ with the outward normal of $\mathrm{Cat}_{m_+(S)}^+$, and hence
\[
\partial_{\mu_Q}x_3
=\langle e_3,\mu_Q\rangle
=-\tanh\left(\frac{x_3}{m_+(S)}\right).
\]
The divergence theorem gives
\begin{align*}
	0
	&=\int_Q\Delta_Qx_3
	=\int_{\partial Q}\partial_{\mu_Q}x_3
	=-\int_{\partial Q}
	\tanh\left(\frac{x_3}{m_+(S)}\right)ds.
\end{align*}
Since $x_3\geq0$ on $M_+(D)$, this identity gives $x_3=0$ on $\partial Q$.
The maximum principle then gives $x_3\equiv0$ on $Q$. Hence
\[
Q\subset\overline{M_+(D)}\cap\{x_3=0\}=D,
\qquad
 \partial Q
\subset S_+(D)\cap\{x_3=0\}=\partial D.
\]
As an embedded two-surface contained in the flat disk $D$, the interior of
$Q$ is relatively open in $\operatorname{int}D$. Compactness and
$\partial Q\subset\partial D$ make it relatively closed. Hence $Q=D$, and
$D$ is outermost toward $\mathcal E_+$.
\end{proof}

We conclude by verifying that the catenoid realizes equality.

\begin{proposition}
\label{prop:catenoid-model}
For $\mathrm{Cat}_m$, with either end designated as $\mathcal E_+$ and with
$M(\mathrm{Cat}_m)$ chosen as the component containing the axis, one has
\begin{equation}
 m_+(\mathrm{Cat}_m)=m,
 \qquad
 A_{\mathrm{Cat}_m}=\pi m^2.
 \label{eq:catenoid-model-data}
\end{equation}
Thus equality holds in \eqref{eq:main-inequality}.
\end{proposition}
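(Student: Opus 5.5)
The plan is to verify the two quantities in \eqref{eq:catenoid-model-data} separately and then combine them. By symmetry we may designate the upper end as $\mathcal E_+$, so $q=1$ and the other end is $\mathcal E_1$.

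\emph{Mass.} The upper end $\mathrm{Cat}_m^+$ is, for $|y|>m$, the graph of
\[
\psi(y)=m\operatorname{arcosh}(|y|/m).
\]
The region containing the axis lies above this graph, so the normal convention of Definition~\ref{def:af-end} holds. This graph grows logarithmically, so it does not literally satisfy the decay $|D\psi|=O(|y|^{-\tau})$. We will interpret the catenoid as asymptotically flat in the sense used in \cite{EK}, where the half-catenoid is the model equality case; alternatively we note that the flux formula \eqref{eq:mass} only needs $D\psi$. Directly,
\[
D\psi(y)=\frac{m\,y}{|y|\sqrt{|y|^2-m^2}}.
\]
Hence on $|y|=r$,
\[
y\cdot D\psi=\frac{mr}{\sqrt{r^2-m^2}},
\]
and
\[
\frac1{2\pi r}\int_{|y|=r}y\cdot D\psi\,d\mathcal H^1=\frac{mr}{\sqrt{r^2-m^2}}\to m.
\]
Since $H=0$, integrability of $H$ is trivial, so $m_+(\mathrm{Cat}_m)=m$.

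\emph{Separating area.} The waist disk $\{(y,0):|y|\le m\}$ meets the catenoid orthogonally along the waist circle. It separates the two ends, so it is an admissible separator and gives $A_{\mathrm{Cat}_m}\le\pi m^2$.

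For the reverse inequality, the first route is the calibration argument. Foliate the solid region by the flat disks $D_s=\{x_3=s\}\cap\overline M$, whose radii are $m\cosh(s/m)$. The vector field $e_3$ is divergence free. On the lateral boundary, $e_3\cdot\nu_S=-\tanh(x_3/m)$ has the sign of $-x_3$.

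Let $\Sigma$ be an admissible separator with region $\Omega=\Omega(\Sigma)$ containing the lower end. Apply the divergence theorem to $e_3$ on the bounded set $\Omega\setminus\{x_3<0\}$ intersected with the upper half, and symmetrically to $-e_3$ below. This compares $|\Sigma|$ with the waist disk, and the lateral terms have a favorable sign.

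More simply, we can avoid the lateral terms altogether by projection. The orthogonal projection $\pi$ onto $\{x_3=0\}$ is area nonincreasing. We claim $\pi(\Sigma)$ covers the disk $\{|y|\le m\}$. Indeed, each vertical line $\{y\}\times\mathbb R$ with $|y|<m$ lies entirely in $M$ and runs from the lower end to the upper end. It therefore must cross $\Sigma$, since $\Omega$ contains a tail of $\mathcal E_1$ and excludes a tail of $\mathcal E_+$. This gives $|\Sigma|\ge\pi m^2$, and hence $A_{\mathrm{Cat}_m}=\pi m^2$.

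The main obstacle is the crossing argument. The line lies in the interior $\mathring M$, and $\Sigma$ is the relative boundary of $\Omega$ in $\overline M$, so a path in $\mathring M$ from inside $\Omega$ to outside it must meet $\Sigma$. The remaining point is to check that the line's ends lie in the respective end tails, which follows because the end tails are the regions $\pm x_3$ large.

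Combining, $\sqrt{A_{\mathrm{Cat}_m}/\pi}=m=m_+(\mathrm{Cat}_m)$, so equality holds in \eqref{eq:main-inequality}.
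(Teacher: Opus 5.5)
Your argument is correct. The mass computation and the upper bound from the waist disk coincide with the paper's. For the lower bound you use the same horizontal projection $p$, but you justify the key step differently.

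\begin{itemize}
\item The paper argues that $\partial\Sigma$ generates $H_1(\mathrm{Cat}_m;\mathbb Z)$. It deduces $\deg(p|_\Sigma,z)=1$ for a.e.\ $z\in B_m$, and then applies the area formula $|\Sigma|\geq\int_\Sigma|\operatorname{Jac}_\Sigma p|\,dA\geq\int_{B_m}|\deg(p|_\Sigma,z)|\,dz$.
\item You show $p(\Sigma)\supset B_m$ directly. Each vertical line over $B_m$ lies in $\mathring M$ and joins a tail inside $\Omega(\Sigma)$ to a tail outside it, so it meets $\Sigma$. You then use that a $1$-Lipschitz map does not increase $\mathcal H^2$.
\end{itemize}

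Your version is more elementary. It uses only the separation property, needs no orientation or homology, and applies verbatim to any compact set separating the ends. The one point to state explicitly is why the ends of each line lie in the correct tails. For large $T$, the sets $\{\pm x_3>T\}\cap\overline M$ are connected and avoid the compact set $\Sigma$. Each of them is therefore contained entirely in $\Omega(\Sigma)$ or entirely in its complement, according to which end of $S$ it contains. The degree version gives finer information: $\Sigma$ covers $B_m$ with algebraic multiplicity one. That is the natural starting point if one wanted to analyze the equality case $|\Sigma|=\pi m^2$.

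One correction. Your remark that the catenoid ``does not literally satisfy'' the decay in Definition~\ref{def:af-end} is wrong. The definition constrains only $D\psi$ and $D^2\psi$, not the logarithmic growth of $\psi$. Here $|D\psi|=m/\sqrt{|y|^2-m^2}$. The radial and tangential eigenvalues of $D^2\psi$ are $-m|y|/(|y|^2-m^2)^{3/2}$ and $m/\bigl(|y|\sqrt{|y|^2-m^2}\bigr)$. Hence $|D\psi|+|y||D^2\psi|=O(|y|^{-1})$, and the end is asymptotically flat with $\tau=1$.

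No reinterpretation is needed. Together with $H=0$ and the topology \eqref{eq:topology} with $q=1$, this is exactly what makes $\mathrm{Cat}_m$ a legitimate equality case for Theorem~\ref{thm:main}, rather than an example lying outside its hypotheses.
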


\begin{proof}
On the upper graphical end,
\begin{align*}
 \psi_m(y)&=m\operatorname{arcosh}(|y|/m),
 &D\psi_m(y)&=\frac{m}{\sqrt{|y|^2-m^2}}\frac{y}{|y|},\\
 \frac1{2\pi R}\int_{|y|=R}y\cdot D\psi_m\,d\mathcal H^1
 &=\frac{mR}{\sqrt{R^2-m^2}}\longrightarrow m.
\end{align*}
Reflection across the waist plane gives the same mass for the other end.

The waist disk is admissible, so $A_{\mathrm{Cat}_m}\leq\pi m^2$. Conversely,
let $\Sigma\in\mathcal A_{\mathrm{Cat}_m}$ and let
$p:\mathbb R^3\to\mathbb R^2$ be horizontal projection. Since $\Sigma$
separates the two ends, its oriented boundary generates
$H_1(\mathrm{Cat}_m;\mathbb Z)$, and hence
\begin{equation}
 \deg(p|_\Sigma,z)=1\quad\text{for a.e. }z\in B_m,
 \qquad
 |\Sigma|\geq\int_\Sigma|\operatorname{Jac}_\Sigma p|\,dA
 \geq\int_{B_m}|\deg(p|_\Sigma,z)|\,dz=\pi m^2.
\end{equation}
Taking the infimum over $\Sigma$ proves \eqref{eq:catenoid-model-data}.
\end{proof}

\appendix

\section*{Appendix}
\setcounter{section}{1}
\setcounter{theorem}{0}
\setcounter{equation}{0}

The following lemma gives the required local smoothing.

\begin{lemma}
	\label{lem:localized-mean-convex-rounding}
	Let $N\subset\mathbb R^3$ be a domain. Suppose that for some open set $U$,
	\[
	\partial N\cap U=\Sigma_1\cup\Sigma_2,
	\]
	where $\Sigma_1$ and $\Sigma_2$ are smooth faces whose intersection
	$\Gamma=\Sigma_1\cap\Sigma_2$ is compact and smooth. Assume that the faces
	meet transversely along $\Gamma$ and that $N$ lies locally on their inner
	sides. With respect to the outward normals of $N$, suppose
	\[
	H_{\Sigma_1}\geq0,
	\qquad
	H_{\Sigma_2}\geq0
	\]
	near $\Gamma$. Then, for every neighborhood $V\Subset U$ of $\Gamma$,
	there exists a domain $\widetilde N$ differing from $N$ only inside $V$.
	Its boundary is smooth near $\Gamma$ and satisfies
	\[
	\widetilde N\triangle N\Subset V,
	\qquad
	\partial\widetilde N=\partial N
	\quad\text{outside }V,
	\qquad
	H_{\partial\widetilde N}\geq0
	\quad\text{on the modified part}.
	\]
\end{lemma}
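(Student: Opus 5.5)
We construct $\widetilde N$ by replacing the corner along $\Gamma$ with a thin tube-like blend whose mean curvature is controlled by a large normal curvature in the cross-sectional direction. The standard device is to write $\partial N$ near $\Gamma$ as the zero set of a $\max$-type function and then smooth the $\max$.

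\emph{Defining functions.} Choose smooth defining functions $f_1,f_2$ near $\Gamma$. Take $f_i$ to be the signed distance to (a smooth extension of) $\Sigma_i$, negative on the side containing $N$. Then, near $\Gamma$,
\[
N=\{\max(f_1,f_2)<0\}.
\]
The gradients $\bar\nabla f_i$ are the outward normals $\nu_i$, and transversality gives $|\nu_1\wedge\nu_2|\ge c>0$ in a neighborhood of $\Gamma$.

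\emph{Smoothing the maximum.} Let $\phi_\varepsilon$ be a smooth convex approximation of $\max$ satisfying $\phi_\varepsilon(a,b)=\max(a,b)$ when $|a-b|\ge\varepsilon$. Set $F_\varepsilon=\phi_\varepsilon(f_1,f_2)$ and define $\widetilde N$ inside $V$ by $\{F_\varepsilon<0\}$, glued to $N$ outside.

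The modified region lies where $|f_1-f_2|<\varepsilon$ and $F_\varepsilon\approx0$. This is an $O(\varepsilon)$ neighborhood of $\Gamma$, because the transversality bound $|\nu_1\wedge\nu_2|\ge c$ forces $|f_1|,|f_2|\lesssim\varepsilon/c$ there. So for small $\varepsilon$ the modification stays in $V$, and $\partial\widetilde N=\partial N$ outside it.

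On the modified part,
\[
\bar\nabla F_\varepsilon=\partial_1\phi\,\nu_1+\partial_2\phi\,\nu_2,
\qquad \partial_i\phi\ge0,\quad \partial_1\phi+\partial_2\phi=1.
\]
By transversality, $|\bar\nabla F_\varepsilon|\ge c'>0$, so $\partial\widetilde N$ is smooth by the implicit function theorem.

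\emph{Mean curvature formula.} The outward mean curvature of a level set is
\[
H=\frac{1}{|\bar\nabla F|}\Bigl(\Delta F-\bar\nabla^2F(\hat n,\hat n)\Bigr),\qquad \hat n=\frac{\bar\nabla F}{|\bar\nabla F|}.
\]
Expanding,
\[
\bar\nabla^2F=\sum_i\partial_i\phi\,\bar\nabla^2 f_i+\sum_{i,j}\partial_{ij}\phi\,\nu_i\otimes\nu_j.
\]

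\emph{The two contributions.}
\begin{itemize}
\item The first part contributes $\sum_i\partial_i\phi\,(\Delta f_i-\bar\nabla^2f_i(\hat n,\hat n))$. Each $f_i$ is a distance function, so $\Delta f_i-\bar\nabla^2 f_i(\nu_i,\nu_i)=H_{\Sigma_i}\ge0$ on $\Sigma_i$, up to errors $O(\varepsilon)$ from off-surface evaluation and from $\hat n\neq\nu_i$. The errors are $O(1)$ bounded, because the Hessians of $f_i$ are bounded near $\Gamma$.
\item The second part is $(\partial_{ij}\phi)$ applied to the component of $(\nu_1,\nu_2)$ orthogonal to $\hat n$. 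The matrix $(\partial_{ij}\phi)$ is positive semidefinite of order $1/\varepsilon$, with kernel $(1,1)$. The relevant vector $w=\nu_1-\nu_2$ projected orthogonally to $\hat n$ has length at least $c''>0$ by transversality. This term is therefore $\ge c'''\,\phi''/\varepsilon$, where $\phi''$ is the one-variable profile second derivative.
\end{itemize}

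\emph{Main obstacle.} The difficulty is the transition zone near the ends of the blend, where $|f_1-f_2|\approx\varepsilon$. There $\phi''$ becomes small and the positive convexity term no longer obviously dominates the $O(\varepsilon)$ and $O(1)$ error terms, which could have either sign.

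I would handle this in two steps.
\begin{itemize}
\item First, use that in the region where $\partial_1\phi\approx1$ the surface is $C^2$-close to $\Sigma_1$, so its mean curvature is $H_{\Sigma_1}$ plus errors.
\item Then absorb any negative errors by first perturbing the faces inward slightly near $\Gamma$. Concretely, replace $f_i$ by $f_i+\delta\chi$, with $\chi$ a cutoff equal to $1$ near $\Gamma$, chosen to be strictly mean convex where needed. Alternatively, choose a profile with $\phi''\ge$ a small multiple on the support and take $\varepsilon$ small.
\end{itemize}
If this fails, the fallback is to quote a known corner-smoothing lemma (Miao-type mollification adapted to mean convexity, as in \cite{EK}) and verify its hypotheses from $H_{\Sigma_i}\ge0$ and transversality.
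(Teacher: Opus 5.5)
Your framework matches the paper's: signed distances $f_i$, a translation-equivariant convex smoothing of $\max$, the level-set formula, and the split of $H$ into a Hessian-weighted part and a convexity part of size $\phi''/\varepsilon$. You also locate the crux correctly, at the ends of the blend. But you do not resolve it, and neither remedy you propose works as stated.

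\emph{The inward push.} Pushing a face inward by $\delta\chi$ changes its mean curvature by $\delta\bigl(\Delta_{\Sigma_i}\chi+|h_{\Sigma_i}|^2\chi\bigr)+O(\delta^2)$. Where $\chi\equiv1$ the gain is only $\delta|h_{\Sigma_i}|^2$, which vanishes on flat parts. Where $\chi$ is cut off, the term $\delta\Delta_{\Sigma_i}\chi$ takes both signs. So on a face with $H_{\Sigma_i}=0$ (as for the minimal disk $D_*$ in the paper's application) you would only move the failure of $H\ge0$ elsewhere.

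\emph{The profile choice.} A profile with $\phi''$ bounded below by a positive constant throughout the blend does not exist, since $\phi''$ must vanish continuously where the blend meets $\max\{f_1,f_2\}$. With your bookkeeping, in which the errors are merely $O(1)$, no profile can absorb them where $\phi''/\varepsilon$ is small. The fallback of quoting an unspecified Miao-type lemma is not a proof.

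The missing idea is that every error in the Hessian-weighted part is bounded by $C\min\{\partial_1\phi,\partial_2\phi\}$, not just by a constant. Suppose $\partial_2\phi$ is small at a point of the blend. Then:
\begin{enumerate}
\item The term $\partial_2\phi\,\operatorname{tr}\bar\nabla^2 f_2$ is $O(\partial_2\phi)$.
\item $|f_1|\le C\varepsilon\,\partial_2\phi$ there, so the level set of $f_1$ through the point has mean curvature at least $H_{\Sigma_1}-C\varepsilon\,\partial_2\phi\ge-C\varepsilon\,\partial_2\phi$.
\item $|\hat n-\nu_1|\le C\partial_2\phi$.
\end{enumerate}
Hence the first part is at least $-C\min\{\partial_1\phi,\partial_2\phi\}$; this is where $H_{\Sigma_i}\ge0$ is used.

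One must then choose the profile so that the convexity gain degenerates no faster than this. The paper takes $F_\varepsilon=\tfrac12\bigl(d_1+d_2+\varepsilon\rho((d_1-d_2)/\varepsilon)\bigr)$ with $\rho''\ge1-(\rho')^2$. Such a $\rho$ exists: for instance, $\rho'(r)=\tanh\bigl(r/(1-r^2)\bigr)$ on $(-1,1)$, extended by $\pm1$. Since $1-(\rho')^2\ge1-|\rho'|=2\min\{\partial_1\phi,\partial_2\phi\}$, the convexity term is at least $(c/\varepsilon)(1-|\rho'|)$. Therefore $H\ge(1-|\rho'|)(c/\varepsilon-C)\ge0$ for small $\varepsilon$, uniformly through the transition zones. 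Without this matched degeneracy, your argument does not give $H\ge0$ on the modified part.
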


\begin{proof}
Let $d_i$ be the signed distance to $\Sigma_i$, negative on $N$. After
shrinking $U$,
\[
N\cap U=\{d_1\leq0,\ d_2\leq0\},
\qquad
|\nabla d_1\wedge\nabla d_2|\geq c>0.
\]
Choose a smooth even convex function $\rho$ satisfying
\[
\rho(r)=|r|\quad\text{for }|r|\geq1,\qquad |\rho'|\leq1,\qquad \rho''\geq1-(\rho')^2,
\]
and define
\[
F_\varepsilon=\frac{d_1+d_2+\varepsilon\rho((d_1-d_2)/\varepsilon)}2.
\]
Thus $F_\varepsilon=\max\{d_1,d_2\}$ whenever
$|d_1-d_2|\geq\varepsilon$. Differentiation gives
\begin{align*}
\nabla F_\varepsilon
&=\frac{1+\rho'}2\nabla d_1+\frac{1-\rho'}2\nabla d_2,\\
D^2F_\varepsilon
&=\frac{1+\rho'}2D^2d_1+\frac{1-\rho'}2D^2d_2
+\frac{\rho''}{2\varepsilon}(\nabla d_1-\nabla d_2)\otimes(\nabla d_1-\nabla d_2),
\end{align*}
where $\rho'$ and $\rho''$ are evaluated at
$(d_1-d_2)/\varepsilon$. Uniform transversality gives
$|\nabla F_\varepsilon|\geq c>0$. Hence
$\Sigma_\varepsilon=\{F_\varepsilon=0\}$ is smooth, with outward mean
curvature
\begin{align*}
H_{\Sigma_\varepsilon}
={}&\frac{\operatorname{tr}_{T\Sigma_\varepsilon}\left(\frac{1+\rho'}2D^2d_1+\frac{1-\rho'}2D^2d_2\right)}{|\nabla F_\varepsilon|}
+\frac{\rho''|(\nabla d_1-\nabla d_2)^{T\Sigma_\varepsilon}|^2}{2\varepsilon|\nabla F_\varepsilon|}.
\end{align*}

On $\Sigma_\varepsilon\cap\{|d_1-d_2|\leq\varepsilon\}$, we have
\[
\frac{\rho''|(\nabla d_1-\nabla d_2)^{T\Sigma_\varepsilon}|^2}{2\varepsilon|\nabla F_\varepsilon|}
\geq\frac{c}{\varepsilon}\bigl(1-(\rho')^2\bigr)
\geq\frac{c}{\varepsilon}(1-|\rho'|).
\]
Moreover,
\[
|d_1|\leq\varepsilon(1-\rho'),\qquad |d_2|\leq\varepsilon(1+\rho').
\]
At every point
$x\in\Sigma_\varepsilon\cap\{|d_1-d_2|\leq\varepsilon\}$ where
$\rho'\leq0$, we have
\begin{align*}
	\operatorname{dist}(x,\Sigma_2)
	&=|d_2(x)|
	\leq\varepsilon(1+\rho'),\\
	\left|\frac{\nabla F_\varepsilon}{|\nabla F_\varepsilon|}-\nabla d_2\right|
	&\leq C|\nabla F_\varepsilon-\nabla d_2|
	\leq C(1+\rho').
\end{align*}
The preceding estimates, the boundedness of $D^2d_i$, and the mean convexity
of the two faces give
\begin{align*}
\frac{\operatorname{tr}_{T\Sigma_\varepsilon}\left(\frac{1+\rho'}2D^2d_1+\frac{1-\rho'}2D^2d_2\right)}{|\nabla F_\varepsilon|}
&\geq-C(1-|\rho'|),\\
H_{\Sigma_\varepsilon}
&\geq(1-|\rho'|)\left(\frac{c}{\varepsilon}-C\right)\geq0
\end{align*}
on $\Sigma_\varepsilon\cap\{|d_1-d_2|\leq\varepsilon\}$ for sufficiently
small $\varepsilon$. On the remaining part,
\[
F_\varepsilon=\max\{d_1,d_2\},
\qquad
\Sigma_\varepsilon\cap\{|d_1-d_2|\geq\varepsilon\}\subset\Sigma_1\cup\Sigma_2,
\]
so $H_{\Sigma_\varepsilon}\geq0$ there as well.

Finally, the preceding bounds and uniform transversality give
\[
\Sigma_\varepsilon\mathbin{\triangle}(\partial N\cap U)\subset\{|d_1|\leq2\varepsilon,\ |d_2|\leq2\varepsilon\}\subset\{x:\operatorname{dist}(x,\Gamma)\leq C\varepsilon\}.
\]
Choose $\varepsilon$ so that
$\{x:\operatorname{dist}(x,\Gamma)\leq C\varepsilon\}\subset V$. Replace
$N\cap U$ by $\{F_\varepsilon\leq0\}$. This defines the required domain
$\widetilde N$, which agrees with $N$ outside $V$ and has smooth mean-convex
boundary on the modified part.
\end{proof}

For completeness, we record the far-out Plateau construction used in
Lemma~\ref{lem:finite-slab-calibration}. The point specific to the
multiple-ended setting is that the minimizing disk remains in $\overline M$.

\begin{lemma}
\label{lem:asymptotic-plateau-disk}
For every sufficiently large regular value $\lambda$, the curve
\[
\gamma_\lambda=S_+\cap\partial B_\lambda
\]
bounds a unique absolutely area-minimizing disk
$D_\lambda\subset\overline M$. The disk is graphical, meets $S$
transversely, and is admissible.
\end{lemma}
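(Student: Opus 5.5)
The plan is to solve a Plateau problem for the curve $\gamma_\lambda$ in the closure of a mean-convex region and then use the asymptotic geometry of $S_+$ to pin down the solution. First I record the structure of $\gamma_\lambda$. Write $S_+$ as the graph of $\psi$ with $|D\psi|+|y||D^2\psi|=O(|y|^{-\tau})$. Then, for $\lambda$ large, $\partial B_\lambda$ meets $S_+$ transversely in a single smooth Jordan curve $\gamma_\lambda$. This curve is a $C^2$-small perturbation of the horizontal circle $\{|y|=\lambda,\,x_3=0\}$: its height oscillation is $o(\lambda)$, and its tangent deviates from horizontal by $O(\lambda^{-\tau})$. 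Take $\lambda$ a regular value of $|x|$ restricted to $S$, chosen so that $\partial B_\lambda\cap S=\gamma_\lambda$ entirely; this is possible by properness, and here the hypothesis that the other ends are far from $\partial B_\lambda$ is used only in the weak form that $S\cap\partial B_\lambda$ is exactly this curve for generic large $\lambda$. If that fails, I instead work with the component $\gamma_\lambda$ and note that the other components do not matter for what follows.

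\textbf{Existence in $\overline M$.} Since $H_S\ge0$ with respect to the outward normal of $M$, the region $\overline M$ is mean-convex in the sense of Meeks--Yau. I would first show that $\gamma_\lambda$ bounds a disk in $\overline M$. This follows from \eqref{eq:topology}: $\gamma_\lambda$ separates $S$ into the tail of $\mathcal E_+$ and the rest, so it bounds a disk in the ball model. Meeks--Yau then gives an area-minimizing embedded disk $D_\lambda\subset\overline M$ spanning $\gamma_\lambda$. It is minimizing among disks in $\overline M$, and by the barrier property it is also minimizing among all integral currents in $\overline M$ with that boundary. Since $M$ is not compact, I would check that a minimizing sequence cannot escape. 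Area is bounded by a spanning competitor, for instance a graphical cap over $\{|y|\le\lambda\}$ lying in $\overline M$ near the $\mathcal E_+$ side. The monotonicity formula then confines $D_\lambda$ to a bounded set. For \emph{absolutely} minimizing, I compare with arbitrary integral currents in $\mathbb R^3$ spanning $\gamma_\lambda$. The nearest-point retraction onto the mean-convex $\overline M$ does not exist globally, so I would argue instead via the convex hull: any minimizing current lies in the convex hull of $\gamma_\lambda$, which is a thin slab of height $o(\lambda)$ around the disk $\{|y|\le\lambda\}$.

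\textbf{Graphicality and uniqueness.} Rescale by $\lambda^{-1}$. The rescaled curves converge in $C^{2,\alpha}$ to the unit circle. By Allard's boundary regularity, the rescaled minimizers converge smoothly to the flat unit disk, so for large $\lambda$ they are graphs of small gradient over the planar disk, which gives graphicality. A graphical minimal surface spanning a curve that is a graph over a convex domain solves the Dirichlet problem for the minimal surface equation, and that solution is unique. Moreover, every absolutely minimizing current with this boundary is such a graph by the same argument, so absolute uniqueness follows. Transversality with $S$ along $\gamma_\lambda$ holds because $D_\lambda$ is nearly horizontal there, with conormal pointing roughly radially inward, while $S$ is nearly horizontal with conormal pointing outward. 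The angle between them therefore tends to $\pi$ rather than $0$, so the surfaces are transverse; more precisely, $\nu_{D_\lambda}$ is close to $\pm e_3$ and $\nu_S$ is also close to $e_3$, but the two half-surfaces lie on opposite sides of $\gamma_\lambda$.

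\textbf{The main obstacle: $D_\lambda\subset\overline M$ and admissibility.} The key difficulty is that the uncontrolled ends $\mathcal E_j$ might poke into the slab near the disk. I would use the fact that $D_\lambda$ is the limit of Meeks--Yau minimizers inside $\overline M$, and identify these with the absolute minimizer via the uniqueness above, after checking that the constrained minimizer is a smooth graph that does not touch $S$ in its interior. Touching is excluded by the strong maximum principle together with $H_S\ge0$: an interior tangency would force $D_\lambda\subset S$. Admissibility then follows. $D_\lambda$ separates $\overline M$ into the component containing the $\mathcal E_+$ tail outside $B_\lambda$ and a complementary region $\Omega(D_\lambda)$ containing all $\mathcal E_j$, by \eqref{eq:topology} and the Jordan separation in the ball model.
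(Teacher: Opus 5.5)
Your skeleton is the paper's: minimize among disks in $\overline M$, show the constrained minimizer is a minimal graph, then use uniqueness for the minimal surface equation to identify it with the absolute minimizer. But the step that carries the lemma is not established.

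\textbf{Graphicality of the constrained minimizer.} You derive graphicality from a blow-down and Allard's boundary regularity. That requires $|D_\lambda|\le\pi\lambda^2(1+o(1))$, so that the rescaled disks converge to the flat unit disk with multiplicity one; without it, sheets could stack in the collapsing convex hull and Allard does not apply. The only area bound you offer comes from a ``graphical cap over $\{|y|\le\lambda\}$ lying in $\overline M$.'' Whether such a nearly flat spanning surface fits inside $\overline M$ is exactly the question posed by the uncontrolled ends, so the argument is circular as written.

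The missing ingredient is Rad\'o's theorem, which is what the paper uses. First, the strong maximum principle excludes interior contact with $S$: contact would force $D_\lambda\subset S$, which is impossible because for $q\ge1$ both components of $S\setminus\gamma_\lambda$ are noncompact. You leave this last point out. The constrained minimizer is then a minimal disk whose boundary projects bijectively onto a strictly convex curve, so Rad\'o makes it a graph, with no area or blow-down input. Alternatively, the bound you need follows from the first-variation identity $2|\Sigma|=\int_{\partial\Sigma}\langle x,\eta\rangle\,ds\le\lambda|\gamma_\lambda|$ for minimal $\Sigma$ spanning $\gamma_\lambda\subset\partial B_\lambda$, but that is not in your proposal.

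Two smaller points in the same vein. Minimality among all currents does not follow from a ``barrier property,'' since Meeks--Yau minimizes only among disks; it follows from the identification with the minimal graph. For existence in the noncompact $\overline M$, the paper exhausts by mean-convex truncations with rounded corners and confines the minimizers by the convex-hull property, rather than by monotonicity.

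\textbf{Transversality.} Your argument here is incorrect as stated. Near $\gamma_\lambda$ both surfaces are nearly horizontal, and a dihedral angle close to $\pi$ means the tangent planes nearly coincide. Slope bounds of order $\lambda^{-\tau}$ cannot distinguish a transverse intersection from a tangential one. Lying on opposite sides of $\gamma_\lambda$ is also compatible with tangency, as for a flat disk inside a plane.

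What works is the Hopf boundary lemma. Write $D_\lambda$ as the graph of $u$ over $\Omega_\lambda$ and $S$ near $\gamma_\lambda$ as the graph of $\psi$. The inclusion $D_\lambda\subset\overline M$ gives $w=u-\psi\ge0$ near $\partial\Omega_\lambda$, with $w=0$ on $\partial\Omega_\lambda$. Since $H_S\ge0$ makes $\psi$ a subsolution of the minimal surface equation, $w$ is a nonnegative supersolution of a linear elliptic equation without zeroth-order term. It is positive inside by the no-contact step. Hopf then gives $\partial_n w\ne0$, i.e.\ $Du\ne D\psi$ along $\partial\Omega_\lambda$.

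A minor point: for $q\ge1$ the other ends meet every large sphere, so $\partial B_\lambda\cap S=\gamma_\lambda$ can never be arranged by properness. Your fallback of working only with the component $\gamma_\lambda$ is the correct one.
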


\begin{proof}
For sufficiently large $\lambda$, the projection of $\gamma_\lambda$ bounds
a strictly convex planar domain. The asymptotic Plateau construction of
\cite[Section~2, preceding Lemma~12]{EK} then gives a unique absolutely
area-minimizing graph in $\mathbb R^3$ with boundary $\gamma_\lambda$. It
remains to place this graph in $\overline M$.

Choose an exhaustion of $M$ by intersections with large balls. Smooth the
resulting corners away from $\gamma_\lambda$ using
Lemma~\ref{lem:localized-mean-convex-rounding}. The truncated domains are
mean-convex, and \eqref{eq:topology} makes $\gamma_\lambda$ null-homotopic
in every sufficiently large truncation. Mean-convex Plateau theory therefore
gives an embedded least-area disk spanning $\gamma_\lambda$ in each such
domain. A fixed spanning disk gives a uniform area bound, while the
convex-hull property confines all these disks to a fixed compact set.
Standard compactness yields an embedded least-area disk in $\overline M$
with boundary $\gamma_\lambda$, and the maximum principle excludes interior
contact with $S$.

Rad\'o's theorem \cite{Rado} makes the limiting disk a graph over the planar
domain bounded by the projection of $\gamma_\lambda$. Uniqueness for the
minimal-graph equation identifies it with the graph constructed above and
also proves uniqueness among absolutely area-minimizing disks. The
asymptotic graphical estimates give transversality for sufficiently large
$\lambda$. Finally, \eqref{eq:topology} shows that the disk separates
$\mathcal E_+$ from all the remaining ends. Thus it is admissible.
\end{proof}

The following identities record the variational conventions for
$\mathscr F_t$. Passing to the complement reverses the normal and wall
orientations but describes the same capillary contact.

Let $G\subset\overline M$ and set
\[
\Sigma=\partial G\cap\mathring M,\qquad
E=\Omega(\Gamma_1)\setminus G,\qquad
W_G=\{x\in S:\operatorname{Tr}_S\chi_G(x)=1\}.
\]
Assume that $\Sigma$ is smooth and compact. Let $\nu_G$ be the outward unit
normal of $G$ along $\Sigma$. Along $\partial\Sigma$, let $\eta_\Sigma$ be the
unit conormal in $\Sigma$ directed away from the interior of $\Sigma$, and
let $\bar\eta_G$ be the unit conormal
in $S$ pointing from $W_G$ into $S\setminus W_G$. The orientations associated
with $E$ are
\[
\nu_E=-\nu_G,\qquad \bar\eta_E=-\bar\eta_G.
\]
Let $\{\Phi_s\}_{|s|<\varepsilon}$ be a smooth ambient variation preserving
$S$, and set
\[
\Phi_0=\operatorname{Id},\qquad \Phi_s(S)=S,\qquad
Y=\left.\partial_s\Phi_s\right|_{s=0}.
\]
For $t\in\mathbb R$, define
\[
 \mathscr F_t(G)=P(G;\mathring M)+\tanh t\,|W_G|.
\]

\begin{lemma}
\label{lem:complement-first-variation}
The first variation is
\begin{equation}
 \frac{d}{ds}\bigg|_{s=0}\mathscr F_t(\Phi_s(G))
 =\int_\Sigma H_\Sigma\langle Y,\nu_E\rangle\,dA
 +\int_{\partial\Sigma}
 \langle Y,\eta_\Sigma-\tanh t\,\bar\eta_E\rangle\,ds.
 \label{eq:appendix-first-variation}
\end{equation}
Consequently, if $G$ is $\mathscr F_t$-critical, then
\begin{equation}
 H_\Sigma=0,\qquad
 \langle\nu_E,\nu_S\rangle=-\tanh t.
 \label{eq:appendix-capillary-angle}
\end{equation}
\end{lemma}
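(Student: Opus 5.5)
The plan is to compute the first variation of each piece of $\mathscr F_t(G)=P(G;\mathring M)+\tanh t\,|W_G|$ separately and then add them.

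\emph{Interior perimeter term.} Since $\Sigma$ is smooth and compact, $P(\Phi_s(G);\mathring M)=|\Phi_s(\Sigma)|$. The classical first variation of area for a surface with boundary gives
\[
\frac{d}{ds}\Big|_{s=0}|\Phi_s(\Sigma)|=-\int_\Sigma\langle \vec H,Y\rangle\,dA+\int_{\partial\Sigma}\langle Y,\eta_\Sigma\rangle\,ds .
\]
Here $\vec H$ is the mean curvature vector. With the convention $h(X,Y)=\langle\bar\nabla_X\nu,Y\rangle$ and unit normal $\nu$, we have $-\vec H=H\nu$. The task is to check which normal the paper's sign convention selects; the stated formula uses $\nu_E=-\nu_G$. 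So I take $H_\Sigma$ computed with respect to $\nu_E$, which makes the interior term $\int_\Sigma H_\Sigma\langle Y,\nu_E\rangle$. The tangential part of $Y$ contributes only through the boundary term.

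\emph{Wetted-area term.} Since $\Phi_s$ preserves $S$, it restricts to a flow on $S$ with velocity $Y|_S$, which is tangent to $S$. We have $W_{\Phi_s(G)}=\Phi_s(W_G)$, and $\partial W_G=\partial\Sigma$. The first variation of area of a domain in $S$ is $\int_{\partial W_G}\langle Y,\bar\eta_G\rangle\,ds$, where $\bar\eta_G$ is the outward conormal of $W_G$ in $S$. The divergence term integrates to this because the flow maps $S$ to itself. Rewriting with $\bar\eta_G=-\bar\eta_E$ gives
\[
\tanh t\int_{\partial\Sigma}\langle Y,\bar\eta_G\rangle\,ds=-\tanh t\int_{\partial\Sigma}\langle Y,\bar\eta_E\rangle\,ds .
\]
Summing the two contributions yields \eqref{eq:appendix-first-variation}.

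\emph{Criticality.} Choose $Y$ compactly supported in the interior of $\Sigma$, with normal component $\varphi\,\nu_E$ for arbitrary $\varphi$; this forces $H_\Sigma=0$. Next take $Y$ tangent to $S$ along $\partial\Sigma$. This is possible for any tangent field to $S$, by extending and flowing, because $\Sigma$ meets $S$ transversally. Then $\eta_\Sigma-\tanh t\,\bar\eta_E$ must have zero component along $T S\cap(T\partial\Sigma)^\perp$, which is spanned by $\bar\eta_E$. The component along $T\partial\Sigma$ vanishes automatically, since both conormals are orthogonal to $\partial\Sigma$. So $\langle\eta_\Sigma,\bar\eta_E\rangle=\tanh t$.

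At a boundary point, the plane orthogonal to $T\partial\Sigma$ contains the orthonormal pair $\{\nu_E,\eta_\Sigma\}$ and the orthonormal pair $\{\nu_S,\bar\eta_E\}$. In a 2-plane, the angle relation gives $\langle\eta_\Sigma,\bar\eta_E\rangle=\pm\langle\nu_E,\nu_S\rangle$. Fixing the orientation determines the sign, giving $\langle\nu_E,\nu_S\rangle=-\tanh t$.

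\emph{Main obstacle.} The main difficulty is bookkeeping of orientations, namely three sign checks:
\begin{itemize}
\item that $\bar\eta_E$ points from the wetted region of $E$ outward, i.e.\ into $W_G$;
\item that $\eta_\Sigma$ points out of $\Sigma$, toward $S$;
\item the sign in the angle identity, i.e.\ that the rotation taking $(\nu_S,\bar\eta_E)$ to $(\nu_E,\eta_\Sigma)$ produces a minus sign.
\end{itemize}
I would settle all three with a model case: a flat half-plane support $S=\{x_3=0\}$ with outward normal $\nu_S=-e_3$ and $M=\{x_3>0\}$, where $\Sigma$ is a tilted half-plane through the $x_1$-axis. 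Compute everything explicitly there. The result matches the convention used in Proposition~\ref{prop:capillary-separator} and in \cite{EK}.
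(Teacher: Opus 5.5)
Your proposal is correct and follows the paper's proof: the paper also computes the first variation of $P(\cdot\,;\mathring M)$ and the transport of $|W_G|$ separately, then reads off the contact angle from $\eta_\Sigma^{T_S}=-\langle\nu_E,\nu_S\rangle\,\bar\eta_E$. One small wording slip is that the frames $(\nu_S,\bar\eta_E)$ and $(\nu_E,\eta_\Sigma)$ are oppositely oriented, so they differ by a reflection rather than a rotation, and this is exactly what produces the minus sign that your half-plane model computation would confirm.
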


\begin{proof}
The first-variation and transport formulas give
\begin{align*}
 \frac{d}{ds}\bigg|_{s=0}P(\Phi_s(G);\mathring M)
 &=\int_\Sigma H_\Sigma\langle Y,\nu_E\rangle\,dA
 +\int_{\partial\Sigma}\langle Y,\eta_\Sigma\rangle\,ds,\\
 \frac{d}{ds}\bigg|_{s=0}|W_{\Phi_s(G)}|
 &=-\int_{\partial\Sigma}\langle Y,\bar\eta_E\rangle\,ds.
\end{align*}
This proves \eqref{eq:appendix-first-variation}. Interior variations give
$H_\Sigma=0$. Along $\partial\Sigma$,
\[
 \eta_\Sigma^{T_S}
 =-\langle\nu_E,\nu_S\rangle\bar\eta_E.
\]
The boundary term vanishes for every $Y$ tangent to $S$ precisely when
$\langle\nu_E,\nu_S\rangle=-\tanh t$.
\end{proof}

\begin{lemma}
\label{lem:stability-euler-characteristic}
Let $\Sigma$ be a connected compact strongly stable minimal capillary surface with
\[
 \langle\nu_\Sigma,\nu_S\rangle=-\tanh t,
 \qquad t\geq0,
\]
and suppose $H_S\geq0$ along $\partial\Sigma$.  Then
\begin{equation}
 \frac12\int_\Sigma|h_\Sigma|^2\,dA
 +\cosh t\int_{\partial\Sigma}H_S\,ds
 \leq2\pi\chi(\Sigma).
 \label{eq:stability-topology-inequality}
\end{equation}
In particular, $\Sigma$ is a disk.
\end{lemma}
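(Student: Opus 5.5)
The plan is to combine the second variation of $J_t$, evaluated with the constant test function $1$, with Gauss--Bonnet on $\Sigma$. Because $\Sigma$ is minimal, the Gauss equation gives $K_\Sigma=-\tfrac12|h_\Sigma|^2$. Strong capillary stability in the sense of \cite[Appendix~C]{EK} says that, for every $f\in C^\infty(\Sigma)$,
\[
\int_\Sigma|\nabla f|^2-|h_\Sigma|^2f^2\,dA-\int_{\partial\Sigma}q\,f^2\,ds\geq0 .
\]
The boundary coefficient for contact angle $\theta$ with $\cos\theta=\tanh t$ (up to sign convention) has the standard form
\[
q=\frac{1}{\sin\theta}h_S(\bar\eta,\bar\eta)+\cot\theta\, h_\Sigma(\eta,\eta).
\]
Here $\sin\theta=1/\cosh t$ and $\cot\theta=\sinh t$. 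I would first fix the orientations so that these signs match the paper's conventions: $h_S$ is defined with respect to $\nu_S$, and $\langle\nu_\Sigma,\nu_S\rangle=-\tanh t$.

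\emph{Step 1 (test with $f=1$).} Taking $f=1$ gives
\[
\int_\Sigma|h_\Sigma|^2+\int_{\partial\Sigma}\Bigl(\cosh t\,h_S(\bar\eta,\bar\eta)+\sinh t\,h_\Sigma(\eta,\eta)\Bigr)ds\leq0 .
\]

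\emph{Step 2 (geodesic curvature of $\partial\Sigma$).} Next I would compute the geodesic curvature $\kappa$ of $\partial\Sigma$ in $\Sigma$ with respect to the conormal $\eta$. Along $\partial\Sigma$ the unit tangent $T$ is common to $\Sigma$ and $S$. The frames $(\nu_\Sigma,\eta)$ and $(\nu_S,\bar\eta)$ span the same normal plane and differ by the rotation through angle $\theta$. Writing $\eta=\cos\theta\,\nu_S+\sin\theta\,\bar\eta$ up to signs, one gets
\[
\kappa=-\langle\nabla_TT,\eta\rangle=\sin\theta\,\bar\kappa+\cos\theta\,h_S(T,T),
\]
where $\bar\kappa$ is the geodesic curvature in $S$. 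It is more useful to express $\kappa$ in terms of the second fundamental forms. Eliminating the normal curvature of the curve between the two surfaces yields
\[
\kappa=\frac{h_S(T,T)-\cos\theta\, h_\Sigma(T,T)}{\sin\theta}.
\]
Then minimality $h_\Sigma(T,T)=-h_\Sigma(\eta,\eta)$ and $h_S(T,T)=H_S-h_S(\bar\eta,\bar\eta)$ give
\[
\kappa=\cosh t\,\bigl(H_S-h_S(\bar\eta,\bar\eta)\bigr)+\sinh t\,h_\Sigma(\eta,\eta).
\]

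\emph{Step 3 (combine with Gauss--Bonnet).} Gauss--Bonnet gives
\[
-\tfrac12\int_\Sigma|h_\Sigma|^2+\int_{\partial\Sigma}\kappa\,ds=2\pi\chi(\Sigma).
\]
Adding the Step~1 inequality, the terms $\cosh t\,h_S(\bar\eta,\bar\eta)$ cancel. What remains is
\[
\tfrac12\int_\Sigma|h_\Sigma|^2+\cosh t\int_{\partial\Sigma}H_S\,ds+2\sinh t\int_{\partial\Sigma}h_\Sigma(\eta,\eta)\,ds\leq2\pi\chi(\Sigma).
\]
So the signs must be arranged so that the $h_\Sigma(\eta,\eta)$ terms cancel rather than add. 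This is the main obstacle: the sign of $\cot\theta$ in $q$ relative to the sign in $\kappa$. With the correct orientation, as in \cite[Lemma~49]{EK}, they do cancel, and I would verify this by a careful frame computation, checking it against the known flat-disk/catenoid case. If the cancellation still fails, the fallback is to use a test function other than $1$; but with the contact-angle geometry the standard identity gives exactly this cancellation, and the inequality \eqref{eq:stability-topology-inequality} follows.

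\emph{Step 4 (topology).} The left-hand side is nonnegative because $H_S\geq0$. Hence $\chi(\Sigma)\geq0$, i.e.\ $\chi(\Sigma)\in\{1,2\}$ for the connected surface. Since $\partial\Sigma\neq\varnothing$, as noted before, $\chi(\Sigma)\leq1$, so $\chi(\Sigma)=1$ and $\Sigma$ is a disk.
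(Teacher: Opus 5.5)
Your Steps~1--3 follow the paper's route: the paper also tests strong stability with the constant function and combines it with $2K_\Sigma=-|h_\Sigma|^2$ and Gauss--Bonnet, citing \cite[Lemma~49]{EK}.

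\textbf{The gap is in Step~4.} From $\chi(\Sigma)\geq0$ you pass to $\chi(\Sigma)\in\{1,2\}$, which drops $\chi(\Sigma)=0$. For a connected compact two-sided (hence orientable) surface with nonempty boundary, $\chi(\Sigma)\geq0$ leaves both the disk and the annulus. Ruling out the annulus is exactly the content of ``in particular, $\Sigma$ is a disk,'' and it needs an equality analysis. Your computation, written exactly, is the identity
\[
2\pi\chi(\Sigma)=\tfrac12\int_\Sigma|h_\Sigma|^2\,dA+\cosh t\int_{\partial\Sigma}H_S\,ds+Q(1,1),
\]
where $Q$ is the stability form and all three terms are nonnegative. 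If $\chi(\Sigma)=0$, then:
\begin{enumerate}
\item $h_\Sigma\equiv0$, so $\Sigma$ is a flat planar annulus;
\item $H_S=0$ on $\partial\Sigma$;
\item $Q(1,1)=0$.
\end{enumerate}
Since $Q\geq0$, the constant $1$ lies in the null space of $Q$. The boundary Jacobi condition $\partial_\eta 1=q$ then forces $q\equiv0$, i.e.\ $h_S(\bar\eta,\bar\eta)=0$ on $\partial\Sigma$. With $H_S=0$ this gives $h_S(T,T)=0$, and hence $\kappa\equiv0$ along $\partial\Sigma$. But each boundary curve of a compact planar annulus is a closed embedded plane curve with $\int|\kappa|\,ds\geq2\pi$. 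This contradiction is how the paper excludes the annulus.

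\textbf{A secondary point about Step~2.} The sign trouble there comes from a wrong decomposition, not from an orientation ambiguity. Since $\eta\perp\nu_\Sigma$ and $\langle\nu_\Sigma,\nu_S\rangle=-\tanh t$, one has $\langle\eta,\nu_S\rangle=\pm1/\cosh t$, not $\tanh t$. Take
\[
\nu_\Sigma=-\tanh t\,\nu_S+\tfrac1{\cosh t}\bar\eta,\qquad
\eta=\tfrac1{\cosh t}\nu_S+\tanh t\,\bar\eta,\qquad
h_\Sigma(X,Y)=\langle\bar\nabla_X\nu_\Sigma,Y\rangle .
\]
Writing $\nu_S=\frac1{\cosh t}\eta-\tanh t\,\nu_\Sigma$ and pairing with $\bar\nabla_TT$ gives
\[
\kappa=\cosh t\,h_S(T,T)+\sinh t\,h_\Sigma(T,T)=\cosh t\bigl(H_S-h_S(\bar\eta,\bar\eta)\bigr)-\sinh t\,h_\Sigma(\eta,\eta).
\]
Linearizing the contact angle gives $q=\cosh t\,h_S(\bar\eta,\bar\eta)+\sinh t\,h_\Sigma(\eta,\eta)$. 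Hence $\kappa+q=\cosh t\,H_S$, so the $h_\Sigma(\eta,\eta)$ terms do cancel and you need no fallback test function.
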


\begin{proof}
Testing the strong capillary stability inequality with constant normal speed
and using the boundary decomposition formula, the minimal-surface identity
\[
 2K_\Sigma=-|h_\Sigma|^2,
\]
and Gauss--Bonnet gives \eqref{eq:stability-topology-inequality}; see
\cite[Lemma~49]{EK}. Both terms on the left are nonnegative, so
$\chi(\Sigma)\geq0$. Since $\Sigma$ is a connected orientable surface with
boundary, it is a disk or an annulus.

Suppose that $\Sigma$ is an annulus. Since $\chi(\Sigma)=0$, equality holds in
\eqref{eq:stability-topology-inequality}, so $h_\Sigma=0$ and $H_S=0$ along
$\partial\Sigma$. The stability form vanishes on the constant
function, and its boundary Jacobi equation gives
$k_{\partial\Sigma}=0$. On the other hand, $h_\Sigma=0$ makes $\Sigma$ a
planar annulus, whose two embedded boundary curves satisfy
\[
 \int_{\partial\Sigma}|k_{\partial\Sigma}|\,ds\geq4\pi.
\]
This contradiction proves that $\Sigma$ is a disk.
\end{proof}

\begin{remark}
Lemma~\ref{lem:stability-euler-characteristic} is applied component by
component. It does not require the full capillary separator to be connected.
Thus the variational construction treats each component without assuming
that the full capillary separator is connected.
\end{remark}

\end{document}